\newtheorem{theorem}{Theorem}
\theoremstyle{plain}
\newtheorem{corollary}[theorem]{Corollary}
\newtheorem{fact}{Fact}
\newtheorem{lemma}[theorem]{Lemma}
\newtheorem{notation}{Notation}
\newtheorem{definition}{Definition}
\newtheorem{preexample}{Example}
\newenvironment{example}%
  {\begin{preexample}\upshape}{\end{preexample}}
\newtheorem{problem}{Problem}
\newcommand{\bprob}{\begin{problem}\begin{normalfont}}
\newcommand{\eprob}{\end{normalfont}\end{problem}}
\newtheorem{proposition}[theorem]{Proposition}
\numberwithin{theorem}{section}
\setlist[enumerate, 1]{
  leftmargin=\parindent,
  parsep=.5\parsep,
  itemsep=0ex
}
\setlist[itemize, 1]{
  leftmargin=\parindent,
  parsep=.5\parsep,
  itemsep=0ex
}
\newlist{tasks}{enumerate}{1}
\setlist[tasks]{
  label=\arabic*\enspace\(\cdot\),
  parsep=.5\parsep,
  itemsep=0ex,
  labelsep=.5em,
  leftmargin=*,
  widest=1
}
\lstdefinelanguage{Maude}{%
  morekeywords={%
    mod, endm, fmod, endfm, th, endth, fth, endfth, is,
    protecting,
    sort, sorts, subsort,
    var, vars,
    op, ops,
    eq, ceq, rl, crl,
    search
  }%
}
\newcommand{\maudelisting}{%
  \lstset{%
    language=Maude,
    basicstyle=\ttfamily,
    keywordstyle=\bfseries,
    columns=flexible,
    keepspaces=true,
    frame=single,
    escapechar=\#
  }%
}
\newcommand{\st}{\mathrm{st}}
\newcommand{\op}{\mathrm{op}}
\newcommand{\ol}[1]{\overline{#1}}
\newcommand{\aar}{\shortrightarrow}
\newcommand{\heq}{H_=}
\newcommand{\htr}{H_{\aar}}
\newcommand{\co}{\,\colon\;}
\def\sem#1{[\![ #1 ] \!]}
\def\ttbf#1{\textbf{\texttt{#1}}}
\newcommand{\Mod}{\mathit{Mod}}
\newcommand{\lra}{\longrightarrow}
\def\MSA{\mbox{$\mathit{MSA}$}}
\def\POA{\mbox{$\mathit{POA}$}}
\def\RWL{\mbox{$\mathit{RWL}$}}
\newcommand{\hornsen}[3]{\lc{\forall{#1} #2 \implies #3}}
\newcommand{\rew}{%
  \mathrel{\vbox{\offinterlineskip\ialign{%
    \hfil##\hfil\cr
    $\scriptstyle\star$\cr
    \noalign{\kern-0ex}
    $\longrightarrow$\cr
}}}}
\begin{document}
\title{Non-deterministic Algebraic Rewriting as Adjunction}
\author{R\u{a}zvan Diaconescu}
\address{Simion Stoilow Institute of Mathematics of the Romanian
  Academy}
\email{Razvan.Diaconescu@ymail.com}

\date{}


\begin{abstract}
We develop a general model theoretic semantics to rewriting beyond the 
usual confluence and termination assumptions.
This is based on \emph{preordered algebra} which is a model theory
that extends many sorted algebra.
In this framework we characterise rewriting in \emph{arbitrary}
algebras rather than term algebras (called \emph{algebraic rewriting})
as a persistent adjunction and use this result, on the one hand for
proving the soundness and the completeness of an abstract
computational model of rewriting that underlies the non-deterministic
programming with Maude and CafeOBJ, and on the other hand for
developing a compositionality result for algebraic rewriting in the
context of the pushout-based modularisation technique.  
\end{abstract}

\maketitle

\section{Introduction}

Term rewriting is a computational paradigm that plays a major role in
functional programming, theorem proving or formal verification.
We may distinguish between two kinds of term rewriting:
\begin{itemize}

\item Type-theoretic term rewriting that applies to terms of some
  higher-order logical system, such as $\lambda$-calculus.
  A number of functional programming languages and computer systems
  that implement some form of higher-order logic or type theory
  implement this kind of term rewriting, such as Haskell
  \cite{Marlow:Haskell-2010-Report}, ML \cite{ml-book}, Coq
  \cite{coquand-huet88,coq-book}, etc.

\item Algebraic term rewriting that applies to terms in some form of
  model theory, traditionally some variant of universal algebra.
  This includes also particular important contexts such as group
  theory or ring theory where algebraic rewriting plays a crucial role
  in their computational sides.
  In computing science algebraic term rewriting constitutes the
  computational basis of the execution engines of several algebraic
  specification and programming systems such as OBJ \cite{iobj},
  ASF+SDF \cite{asf+sdf}, Elan \cite{elan}, CafeOBJ \cite{caferep},
  Maude \cite{maude-book}, etc.
  While algebraic term rewriting is commonly used as a decision
  procedure for equational logic (by assuming confluence and
  termination) \cite{baader-nipkow1998}, languages and systems such as
  Maude or CafeOBJ also implement algebraic term rewriting in a
  wider non-deterministic sense, its semantic interpretation going
  beyond universal algebra and its associated equational logic.

\end{itemize}
Our work is concerned with the latter kind of term rewriting in its
wider meaning where confluence and even termination are \emph{not}
assumed.
The semantic implication of this is that rather than collapsing the
intermediate states of the rewriting process to the final result (like
in the case of the traditional approach to rewriting as a decision
procedure for equational logic) we consider them as semantic entities
with separate identities. 

There is yet another dimension of generality to our approach to
rewriting.
In our work we consider rewriting in arbitrary algebras (hence the
terminology `algebraic rewriting'), ordinary term rewriting
corresponding just to the particular case when the algebra under
consideration is the term algebra.
Apart of ordinary term rewriting there are several other
interpretations of (general) algebraic rewriting.
One of them concerns the practically important case of rewriting
modulo axioms, the corresponding algebra being the free algebra of an
equational theory.
Another important application of algebraic rewriting is that of
computation with predefined types, when the corresponding algebra is
not user specified but is rather provided by the respective
computational system.

\subsubsection*{Motivation.}
As a programming and specification paradigm, non-deterministic
programming is a powerful one even in a simple form that does not
involve additional fancy features.
But like with any other advanced programming paradigms its
effectiveness depends on methodologies and on a proper understanding
of the paradigm from the side of the users.
Semantics can play a crucial role in all these, but only when it is
simple enough, clean, and it enjoys those mathematical properties that
support several important general techniques such as a good modular
development.
Our work is motivated by this and builds on the logic\footnote{This
  means mathematically defined syntax, semantics and proof theory, as
  put forward in \cite{nel}.} of \emph{preordered algebra}
(abbreviated \POA)
\cite{cafefun,hpoa,iimt,CodescuMRM10,CodescuMRM11} that has been
established as the logical semantics of non-deterministic rewriting in
CafeOBJ \cite{cafefun} and Maude \cite{CodescuMRM10,CodescuMRM11}. 
Here we refine the definition of \POA\ such that it captures more
accurately programming language aspects and develop a series of new
concepts and results regarding non-deterministic rewriting in \POA\ in
order to refine the mathematical foundations of CafeOBJ and Maude
supporting a deeper understanding of how to use them.
Moreover, some novelties in our work may open the door for new
methodologies for non-deterministic rewriting.
We hope this will increase the attractiveness of this paradigm both
for education and software development. 

\subsection{Our main contributions}

\begin{enumerate}

\item A model theoretic semantics to rewriting that has the following
  characteristics (not necessarily ordered by their importance):
  \begin{itemize}


  \item Mathematical effectiveness.
    It is rigorously based on a refinement of the \POA\ of
    \cite{cafefun,hpoa,iimt} which considers two types of sorts
    (rather than a single one) because this fits more accurately the
    applications.
    This upgraded version of \POA\ remains an \emph{institution}
    \cite{ins,iimt}, and this has important consequences, such as the
    availability of a highly developed model theory at the level of
    abstract institutions \cite{iimt} and of a matured specification and
    programming theory that has become a foundational standard when
    defining formal specification and logic-based languages.

  \item The relationship between the model theoretic semantics and the 
    computational model that we establish here is a \emph{direct} one,
    without the mediation of a system of logical deduction (equational
    logic for deterministic term rewriting or rewriting logic in the
    case of non-deterministic term rewriting).  

  \item The models are considered \emph{loose} rather than tight, term
    models being just one example of our theory.
    All interpretations of algebraic rewriting, including those
    mentioned above, can be thus treated in an uniform and unitary
    way. 

  \end{itemize}
  The latter two characteristics represent important novelties with
  respect to previous \POA\ semantics to rewriting. 

\item A categorical adjunction characterisation of algebraic
  rewriting, which can be considered as an axiomatic view on
  rewriting.
  This core result is instrumental for subsequent developments in
  the paper. 

\item An abstract rewriting-based computational model\footnote{In the
    sense of model theory.} that allows for the integration of
  deterministic with non-deterministic rewriting at different layers. 

\item General results about the \emph{soundness} and the
  \emph{completeness} of algebraic rewriting with respect to the
  proposed model theoretic semantics. 
  These properties are best explained as an isomorphism relationship
  between the models of the denotational and of the operational
  semantics.
  An immediate application of these general results is a genuine model
  theoretic argument for the correctness of the executions of Maude
  and CafeOBJ system modules. 

\item A theorem on the compositionality of algebraic rewriting.
  This result provides a model theoretic foundations for the
  structuring of non-deterministic algebraic rewriting through the
  pushout-based technique. 

\end{enumerate}

\subsection{The structure of the paper}

\begin{enumerate}

\item In a section on preliminaries we review some basic notations and 
  concepts about sets, relations and provide a presentation of the
  basic concepts from many-sorted algebra. 

\item We introduce our model theoretic framework for rewriting, the
  (many-sorted) \emph{preordered algebra} in the enhanced form
  mentioned above.

\item We develop the result that shows that the algebraic rewriting
  relations can be defined through an adjunction between the
  category of many sorted algebras and the category of preordered
  algebras satisfying a set of Horn sentences. 

\item We define the abstract operational model for algebraic rewriting
  and we provide sufficient conditions for its soundness and
  completeness  with respect to the denotational model provided by the
  adjunction result.

\item The theorem on the compositionality of algebraic rewriting. 

\end{enumerate}

\section{Preliminaries}

\subsection{Relations}
A \emph{binary relation} is a subset $R \subseteq A \times B$, where
$A$ and $B$ are sets.
When $A = B$ we say that $R$ is a binary relation on $A$.
Sometimes $(a,b)\in R$ may be denoted as $a \ R \ b$. 
A binary relation $R$ on a set $A$ is
\begin{itemize}

\item \emph{reflexive}, when for each $a\in A$, $(a,a) \in R$,


\item \emph{symmetric}, when for each $a, b \in A$, $(a,b) \in R$
  implies $(b,a) \in R$,

\item \emph{anti-symmetric}, when for each $a, b \in A$, $(a,b),(b,a)
  \in R$ implies $a=b$, 

\item \emph{transitive}, when for each $a,b,c \in A$, $(a,b), (b,c)
  \in R$ implies $(a,c)\in R$,  

\item \emph{preorder}, when it is reflexive and transitive,

\item \emph{partial order}, when it is anti-symmetric preorder,

\item \emph{equivalence}, when it is symmetric and preorder.
  
\end{itemize}
Given an equivalence relation $\sim$ on a set $A$, for each $a\in A$
by $a/_\sim$ we denote the \emph{equivalence class of $a$} which is
the set $\{ b \in A \mid a \sim b \}$.
Let $A/_\sim = \{ a/_\sim \mid a \in A \}$ be the set of all
\emph{equivalence classes} in $A$ determined by $\sim$.

Given a relation $R$ let $R^*$ be its \emph{reflexive-transitive
closure}, which is the least reflexive and transitive relation
containing $R$.
Then $R^* = \bigcup_{n\in \omega} R^n$ where
$R^n = \{ (a_0, a_n) \mid \exists (a_i, a_{i+1}) \in R, \
0 \leqslant i \leqslant n - 1 \}$.
For a binary relation $\lra$ by $\rew$ we mean $\lra^\star$. 

\subsection{Many-sorted Algebra}

Many-sorted algebra (\MSA) has traditionally played a major role in
the semantics of computation.
For instance \MSA\ is the core formalism in the area of algebraic
specification.
Since OBJ \cite{iobj}, due to algebraic term rewriting, \MSA\ has been
turned into a functional programming paradigm (called equational
programming). 
Since the model theoretic framework of our work is built on top of
\MSA, in what follows we provide an overview of the main \MSA\
concepts.  

\subsubsection{Signatures}
We let $S^*$ 
denote the set of all finite sequences of elements from $S$, 
with $[]$ the empty sequence. 
A(n $S$-\emph{sorted}) \emph{signature} $(S,F)$ is an
$S^* \times S$-indexed set
$F = \{ F_{w \aar s} \mid w \in S^*,\ s \in S \}$
of \emph{operation symbols}.
The sets $F_{w \aar s}$ in the definition above stand for 
the sets of symbols with arity $w$ and sort $s$.
We may denote $F_{[]\aar s}$ simply as $F_{\aar s}$.
Note that this definition permits 
{\em overloading}\index{overloading of operation symbols}, 
in that the sets $F_{w \aar s}$ need {\em not\/} be disjoint.
When this leads to parsing ambiguities, the qualification of the
operation symbols by their rank is a good solution.\footnote{This is
  also the solution adopted in the implementation of algebraic
  specification languages.}
Then $\sigma \in F_{w \aar s}$ will be denoted as $\sigma \co \! w \aar s$. 

\subsubsection{Terms}
An \emph{$(S,F)$-term} $t$ of sort $s\in S$, is an
expression of the form $\sigma(t_1,\dots,t_n)$, where
$\sigma \in F_{s_1 \dots s_n \aar s}$ and $t_1, \dots, t_n$ are
$(S,F)$-terms of sorts $s_1, \dots, s_n$, respectively. 
The set of the $(S,F)$-terms of sorts $s$ is denoted by
$(T_{(S,F)})_s$ while $T_{(S,F)}$ denotes the set of all
$(S,F)$-terms.

\subsubsection{Variables}
Let $(S,F)$ be a signature.
A \emph{variable for $(S,F)$}\index{variable (for signature)}
is a pair $(x,s)$ where $x$ is the \emph{name of the variable} and 
$s\in S$ is the \emph{sort of the variable}, such that $(x,s) \not\in
F_{\aar s}$. 
The sort is an essential qualification for a variable.
However, when this is clear, we may simply refer to a variable by its
name only.
For example, if $X$ is a set of variables for $(S,F)$, 
then $(x,s) \in X$ may be denoted $x\in X$.
For this to make sense, but also in order to avoid other kinds of clashes, 
we make the basic assumption valid all over our material, that when 
considering  sets of variables, 
\emph{any two different variables have different names}.

For any signature $(S,F)$ and any set $X$ of variables for $(S,F)$, 
the signature $(S,F\cup X)$ denotes the extension of $(S,F)$ with $X$ as 
new constants that respects the sorts of the variables. 
This means $(F \cup X)_{w \aar s} = F_{w \aar s}$ when $w$ is not empty and 
$(F \cup X)_{\aar s} = F_{\aar s}\cup \{ (x,s) \mid (x,s) \in X
\}$.\footnote{Note that this union is always disjoint because $X$ are
  \emph{new} constants.}

\subsubsection{Substitutions}
Given sets $X$ and $Y$ of variables for a signature $(S,F)$,
an \emph{$(S,F)$-substitution} $\theta$ from $X$ to $Y$ 
is a function $\theta \colon  X \to T_{(S,F \cup Y)}$ that respects
the 
sorts, i.e. if $x$ has sort $s$ then 
$\theta(x) \in (T_{(S,F \cup Y)})_s$. 
The existence of substitutions from $X$ to $Y$ requires that 
whenever there is a variable in $X$ of sort $s$ then  
$(T_{(S,F \cup Y)})_s$ is non-empty.
In general, this condition can be met if we assume that the 
signatures contain at least one constant for each sort. 
Any substitution $\theta \colon  X \to T_{(S,F \cup Y)}$
extends to a function 
$\theta^\sharp \colon   T_{(S,F \cup X)} \to T_{(S,F \cup Y)}$ defined by 
\[
 \theta^\sharp (t) = \left\lbrace
\begin{array}{ll}
 \theta(x) &  \mbox{when \ }  t = x \mbox{ \ for \ } x\in X \\
  \sigma(\theta^\sharp(t_1),\dots,\theta^\sharp(t_n)) & 
  \mbox{when \ } t = \sigma(t_1,\dots,t_n) \mbox{ \ with \ } \sigma \in F.
\end{array}\right.
\]
When there is no danger of notational confusion we may omit `$\sharp$'
from the notation and write simply $\theta(t)$ instead of 
$\theta^\sharp (t)$.

\subsubsection{Sentences}
Given a signature $(S,F)$, an \emph{atomic equation} is a pair
$(t,t')$ of $(S,F)$-terms of the same sort. 
Often we write atomic equations as $t=t'$ rather than $(t,t')$. 
The set of the \emph{$(S,F)$-sentences} is the least set such that: 
\begin{itemize}
 
\item Each atomic $(S,F)$-equation is an $(S,F)$-sentence.

\item If $\rho_1$ and $\rho_2$ are $(S,F)$-sentences then
$\lc{\rho_1 \and \rho_2}$ (\emph{conjunction}),  
$\lc{\rho_1 \or \rho_2}$ (\emph{disjunction}),
$\lc{\rho_1 \implies \rho_2}$ (\emph{implication}) and 
$\lc{\not \rho_1}$ (\emph{negation}) 
are also $(S,F)$-sentences.

\item If $X$ is a set of variables for $(S,F)$, 
then $\lc{\forall{X} \rho}$ and $\lc{\exists{X} \rho}$ are
$(S,F)$-sentences whenever $\rho$ is an $(S,F \cup X)$-sentence. 
 
\end{itemize}

The sentences that do not involve any quantifications are called
\emph{quantifier-free sentences}.\index{quantifier-free sentences} 

A \emph{conditional equation}\index{conditional equation} is a
sentence of the form
\[
  \lc{\forall{X} H \implies C}
\]
where $H$ is a finite conjunction of (atomic) equations and $C$ is a
single (atomic) equation.  

When $H$ is empty the respective conditional equation
is usually written simply as $\lc{\forall{X} C}$ and is called 
\emph{unconditional equation}.\index{unconditional equation}
In this paper 'conditional equations' will be called simply
'equations' and when the equations are unconditional we will
explicitly say 'unconditional equations'. 

\subsubsection{Algebras}

Given a set of sort symbols $S$, an $S$-\emph{indexed} (or \emph{sorted})
\emph{set} $A$ is a family $( A_s )_{s \in S}$ of sets indexed by
the elements of $S$; in this context, $a \in A$ means that $a \in A_s$
for some $s \in S$. 
Given an $S$-indexed set $A$ and $w = s_1...s_n \in S^*$, we let
$A_w = A_{s_1} \times \cdots \times A_{s_n}$; in particular, we let
$A_{[]} = \{\star\}$, some one point set.

Given a signature $(S,F)$, a $(S,F)$-\emph{algebra}\index{algebra} 
$A$ consists of
\begin{itemize}

\item an $S$-indexed set $A$ (the set $A_s$ is called the
\emph{carrier} of $A$ of sort $s$), and

\item a function $A_{\sigma : w \aar s} \colon  A_w \to A_s$ 
for each $\sigma \in F_{w \aar s}$.

\end{itemize}
When there is no danger of ambiguity (because of overloading of $\sigma$)
we may simplify the notation $A_{\sigma : w \aar s}$ to $A_\sigma$.  
If $\sigma \in F_{\aar s}$ then $A_\sigma$ determines an element 
in $A_s$ which may also be denoted $A_\sigma$.

Any $(S,F)$-term $t = \sigma(t_1, \dots, t_n)$, where $\sigma$ is an
operation symbol of $(S,F)$, gets interpreted as an element $A_t \in
A_s$ in a $(S,F)$-algebra $A$ defined by  
\[
A_t = A_\sigma (A_{t_1}, \dots, A_{t_n}). 
\]

\subsubsection{Term algebras}
Given a signature $(S,F)$, the \emph{$(S,F)$-term algebra}, denoted
$0_{S,F)}$, interprets any sort symbol $s \in S$ as the set of the
$(S,F)$-terms of sort $s$, and each operation symbol $\sigma \co \! w
\aar s$ as
\[
(0_{(S,F)})_\sigma (t_1,\ldots, t_n) = \sigma(t_1,\ldots, t_n).
\]

\subsubsection{Homomorphisms}
An $S$-\emph{indexed} (or \emph{sorted}) \emph{function}
$f\colon  A \to B$ is a family $\{ f_s\colon  A_s \to B_s \mid s\in S \}$.
Also, for an $S$-sorted function $f\colon  A \to B$, we let
$f_w \colon  A_w \to B_w$ denote the function product
mapping a tuple of elements $(a_1, \dots, a_n)$ to the tuple
$( f_{s_1}(a_1), \dots, f_{s_n}(a_n) )$. 

Given $(S,F)$-algebras $A$ and $B$, an $(S,F)$-\emph{homomorphism}
from $A$ to $B$ is an $S$-indexed function  $h\colon  A \to B$ such
that  
\[
h_s(A_\sigma (a)) = B_\sigma(h_w (a))
\]
for each $\sigma \in F_{w \aar s}$ and $a \in A_w$.
When there is no danger of confusion we may simply write $h(a)$ 
instead of $h_s (a)$.

Homomorphisms preserve the interpretations of terms, which is a useful
technical property:

\begin{lemma}\label{term-pres-lem}
Let $h \co A \to B$ be an $(S,F)$-algebra homomorphism and let $t$ be
any $(S,F)$-term.
Then $h(A_t) = B_t$.
\end{lemma}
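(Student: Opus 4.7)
The plan is to prove this by structural induction on the term $t$, using the recursive definition of $A_t$ given earlier in the paper together with the defining property of a homomorphism. Since terms over $(S,F)$ are built up from operation symbols (including constants as the base case) according to the inductive definition, structural induction is both natural and, in this setting, the only tool really needed.

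For the base case I take $t$ to be a constant, i.e.\ $\sigma \in F_{\aar s}$, viewed as the term $\sigma()$. Here $A_t$ and $B_t$ are by definition the distinguished elements $A_\sigma \in A_s$ and $B_\sigma \in B_s$. The homomorphism equation $h_s(A_\sigma(a)) = B_\sigma(h_w(a))$ specialised to $w = []$ (so that $a$ is the unique element $\star$ of $A_{[]}$) gives exactly $h_s(A_\sigma) = B_\sigma$, as required. For the inductive step I take $t = \sigma(t_1,\dots,t_n)$ with $\sigma \in F_{w \aar s}$ and $w = s_1\dots s_n$, and I assume the statement holds for each subterm $t_i$. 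Unfolding the definition of $A_t$, applying $h_s$, using the homomorphism condition on $\sigma$, rewriting $h_w$ as the product of the $h_{s_i}$, substituting the induction hypothesis $h_{s_i}(A_{t_i}) = B_{t_i}$ componentwise, and finally folding using the definition of $B_t$ yields $h_s(A_t) = B_t$.

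There is no real obstacle here; the lemma is essentially a bookkeeping exercise verifying that the inductive clause in the definition of term interpretation commutes with the componentwise action of $h$. The only points worth being careful about are (i) treating constants as the zero-ary case of the homomorphism equation rather than as a separate axiom, and (ii) making the sort indices on $h$ explicit at least once, so that the reader can see that $h_w = h_{s_1} \times \cdots \times h_{s_n}$ is precisely what is needed for the induction hypothesis to plug into the homomorphism identity.
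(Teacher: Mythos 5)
Your proof is correct and is exactly the standard structural induction the paper has in mind (the paper in fact omits the proof as routine): unfold $A_t = A_\sigma(A_{t_1},\dots,A_{t_n})$, apply the homomorphism identity for $\sigma$, use $h_w = h_{s_1}\times\cdots\times h_{s_n}$ together with the induction hypothesis, and refold to $B_t$, with constants handled as the $n=0$ case. Nothing is missing.
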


Given $(S,F)$-homomorphisms $h\colon  A \to B$ and $g \colon  B \to
C$, their  composition $h;g$ is the algebra homomorphism $A \to C$
defined by $( h;g )_s = g_s \circ h_s$  for each sort symbol 
$s \in S$.\footnote{This means $(h;g)(a) = g(h(a))$ for 
each $a \in A_s$.}

\subsubsection{Isomorphisms}
An $(S,F)$-homomorphism $h\colon  A \to B$ is a $(S,F)$-\emph{isomorphism}
\index{isomorphism of algebras}
when there exists another homomorphism $h^{-1} \colon  B \to A$ such that 
$h;h^{-1} = 1_A$ and $h^{-1};h = 1_B$, where by $1_A \colon  A \to A$ and 
$1_B \colon  B \to B$ we denote the `identity' homomorphisms that map 
each element to itself.

\begin{fact} \label{iso-bij-fact}
A $(S,F)$-homomorphism $h\colon  A \to B$ is isomorphism if and 
only if each function $h_s\colon  A_s \to B_s$ is bijective (i.e.,
one-to-one and onto, in an older terminology).
\end{fact}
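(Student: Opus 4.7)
The plan is to prove the two implications separately, with the hard direction being the construction of a homomorphic inverse from componentwise bijectivity.

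For the forward direction, assume $h$ is an $(S,F)$-isomorphism with two-sided inverse $h^{-1}$. Since both composition equations $h;h^{-1} = 1_A$ and $h^{-1};h = 1_B$ are equalities of $S$-indexed functions, they restrict at each sort $s \in S$ to $h^{-1}_s \circ h_s = 1_{A_s}$ and $h_s \circ h^{-1}_s = 1_{B_s}$, which immediately makes $h_s$ a bijection with set-theoretic inverse $h^{-1}_s$.

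For the converse, I would start from the assumption that each component $h_s\co A_s \to B_s$ is bijective and define $g\co B \to A$ by letting $g_s$ be the unique set-theoretic inverse of $h_s$ at each sort $s$. The only nontrivial task is then to verify that the $S$-indexed function $g$ is an $(S,F)$-homomorphism; once this is done, $g$ serves as $h^{-1}$ witnessing that $h$ is an isomorphism, since $g_s \circ h_s = 1_{A_s}$ and $h_s \circ g_s = 1_{B_s}$ by construction.

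To show $g$ is a homomorphism, I would fix $\sigma \in F_{w \aar s}$ and $b \in B_w$, set $a = g_w(b)$, so by definition $h_w(a) = b$. Using that $h$ is itself a homomorphism, I get $h_s(A_\sigma(a)) = B_\sigma(h_w(a)) = B_\sigma(b)$, and applying $g_s$ to both sides of this equation gives $A_\sigma(g_w(b)) = g_s(B_\sigma(b))$, which is exactly the homomorphism condition for $g$. The main (mild) obstacle is just bookkeeping: keeping the diagrammatic composition order straight and noting that $g_w$ really is the tuple-wise inverse of $h_w$, which follows from bijectivity at each of the sorts in $w$.
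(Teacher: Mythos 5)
Your proof is correct and is exactly the standard argument that the paper takes for granted by stating this as an unproved Fact: the only content is the easy verification that the componentwise set-theoretic inverse is again a homomorphism, which you carry out properly by applying $g_s$ to the homomorphism equation for $h$. No gaps.
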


\subsubsection{Initial algebras}
Given any class $\mathcal{C}$ of $(S,F)$-algebras, an algebra $A$ is 
\emph{initial}\index{initial algebra} for $\mathcal{C}$ when 
$A \in \mathcal{C}$ and for each algebra $B \in \mathcal{C}$
there exists an unique homomorphism $f \co A \to B$.

\begin{proposition}
If $A$ and $A'$ are both initial algebras for $\mathcal{C}$, then 
there exists an isomorphism $h \co A \to A'$. 
\end{proposition}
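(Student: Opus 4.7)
The plan is to apply the initiality property twice, in both directions, and then use it a third time to force the composites to be identities. This is the standard ``uniqueness of initial objects up to isomorphism'' argument, transcribed into the concrete setting of $(S,F)$-homomorphisms.

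First, since $A$ is initial in $\mathcal{C}$ and $A' \in \mathcal{C}$, there is a (unique) homomorphism $h \co A \to A'$. Swapping the roles, since $A'$ is initial and $A \in \mathcal{C}$, there is a (unique) homomorphism $h' \co A' \to A$. The candidate inverse for $h$ is $h'$, and it remains to check $h;h' = 1_A$ and $h';h = 1_{A'}$.

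For the first identity, observe that both $h;h'$ and $1_A$ are $(S,F)$-homomorphisms from $A$ to $A$ (composition of homomorphisms is a homomorphism, and the identity $S$-sorted function is trivially one). Because $A$ is initial in $\mathcal{C}$ and $A \in \mathcal{C}$, there is a \emph{unique} homomorphism $A \to A$; hence $h;h' = 1_A$. The symmetric argument, using the initiality of $A'$, yields $h';h = 1_{A'}$. Thus $h$ is an isomorphism by the very definition of $(S,F)$-isomorphism given earlier in the excerpt.

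There is essentially no obstacle here: the argument is formal and depends only on the definitions of initiality, homomorphism composition, and isomorphism. The only mild subtlety to flag is the reuse of the uniqueness clause of initiality in the case where the source and target coincide (i.e., homomorphisms $A \to A$), which is the crucial step that forces $h;h'$ to equal $1_A$ rather than some other endomorphism.
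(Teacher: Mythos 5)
Your argument is correct and is exactly the standard one the paper intends (the paper itself omits the proof as well known): use initiality in both directions to get $h$ and $h'$, then use the uniqueness clause applied to endomorphisms of $A$ and of $A'$ to conclude $h;h'=1_A$ and $h';h=1_{A'}$. Nothing is missing.
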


The following is well known and also easy to establish.

\begin{proposition}
The term algebra $0_{(S,F)}$ is initial in the class of all
$(S,F)$-algebras.
\end{proposition}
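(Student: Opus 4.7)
The plan is to establish existence and uniqueness of a homomorphism $f \co 0_{(S,F)} \to B$ for an arbitrary $(S,F)$-algebra $B$, by structural induction on terms.

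For existence, I would define an $S$-sorted function $f \co 0_{(S,F)} \to B$ recursively on the structure of terms: for $\sigma \in F_{s_1\dots s_n \aar s}$ and $(S,F)$-terms $t_1,\dots,t_n$ of the appropriate sorts, set
\[
f_s\bigl(\sigma(t_1,\dots,t_n)\bigr) \;=\; B_\sigma\bigl(f_{s_1}(t_1),\dots,f_{s_n}(t_n)\bigr).
\]
The base case is subsumed by taking $n = 0$, which covers constants $\sigma \in F_{\aar s}$ by $f_s(\sigma) = B_\sigma$. This definition is well-posed because every element of $(0_{(S,F)})_s$ is by definition of that form, and the inductive hypothesis supplies $f_{s_i}(t_i)$ at strictly smaller terms. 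The homomorphism condition $f_s((0_{(S,F)})_\sigma(t_1,\dots,t_n)) = B_\sigma(f_w(t_1,\dots,t_n))$ is then immediate from the construction together with the definition of the term algebra $(0_{(S,F)})_\sigma(t_1,\dots,t_n) = \sigma(t_1,\dots,t_n)$.

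For uniqueness, suppose $g \co 0_{(S,F)} \to B$ is any $(S,F)$-homomorphism. I would argue by induction on the structure of terms that $g_s(t) = f_s(t)$ for every sort $s$ and every term $t \in (T_{(S,F)})_s$. Given $t = \sigma(t_1,\dots,t_n)$ with the inductive hypothesis $g_{s_i}(t_i) = f_{s_i}(t_i)$, the homomorphism property of $g$ forces
\[
g_s(\sigma(t_1,\dots,t_n)) = B_\sigma(g_w(t_1,\dots,t_n)) = B_\sigma(f_w(t_1,\dots,t_n)) = f_s(\sigma(t_1,\dots,t_n)),
\]
so $g = f$. Equivalently, this is the content of Lemma~\ref{term-pres-lem} applied to $g$: since $(0_{(S,F)})_t = t$ for any term $t$, any homomorphism $g$ must satisfy $g(t) = B_t$, pinning it down uniquely.

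There is no genuine obstacle here; the only subtlety worth being careful about is the interplay between the two roles that terms play, namely as syntactic expressions (elements of $T_{(S,F)}$) and as elements of the carrier of the term algebra, and correspondingly the identity $(0_{(S,F)})_t = t$ that makes the induction collapse to the homomorphism equation.
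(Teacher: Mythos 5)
Your proof is correct and is the standard argument; the paper itself omits the proof entirely (stating only that the result is ``well known and also easy to establish''), so there is nothing to diverge from. The structural recursion for existence, the structural induction for uniqueness, and the observation that uniqueness is just Lemma~\ref{term-pres-lem} applied to $g$ together with $(0_{(S,F)})_t = t$ are all exactly what is intended.
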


\subsubsection{Satisfaction}
The satisfaction between $(S,F)$-algebras and $(S,F)$-sentences, 
denoted $\models_{(S,F)}$ (or simply by $\models$ when there is no
danger of confusion), is defined inductively on the structure of the
sentences as follows. 
This process can be regarded as an evaluation of the sentences  to one
of the truth values \texttt{true} or \texttt{false} and which is
contingent on a given model/algebra. 

Given a fixed arbitrary signature $(S,F)$ and a $(S,F)$-algebra $A$, 
\begin{itemize} 

\item $A \models t=t'$ if and only if  $A_t = A_{t'}$ for atomic
  equations, 

\item $\lc{A \models \rho_1 \and \rho_2}$ if and only if 
$A \models \rho_1$ and $A \models \rho_2$, 

\item $\lc{A \models \rho_1 \or \rho_2}$ if and only if
$A \models \rho_1$ or $A \models \rho_2$, 

\item $\lc{A \models \rho_1 \implies\rho_2}$ if and only if
$A \not\models \rho_1$ or $A \models \rho_2$, 

\item $\lc{A \models \not \rho}$ if and only if
$A \not\models \rho$, 

 
\item for any set of variables $X$ for the signature $(S,F)$, and
for any $(S,F \cup X)$-sentence $\rho$, 
$\lc{A \models_{(S,F)} \forall{X}\rho}$ if and only if 
$A' \models_{(S,F \cup X)} \rho$ for each $(S,F \cup X)$-algebra $A'$ 
such that $A'_s = A_s$ for each $s\in S$ and 
$A'_\sigma = A_\sigma$ for each operation symbol $\sigma$ of $F$. 

\item $\lc{A \models \exists{X} \rho}$ if and only if 
$A \not\models \lc{\forall{X} \not \rho}$. 

\end{itemize}
When $A \models \rho$ we say that $A$ \emph{satisfies} $\rho$
or that $\rho$ \emph{holds} in $A$.

\subsubsection{Signature morphisms}
Given two \MSA\ signature $(S,F)$ and $(S',F')$, a \emph{signature
  morphism} $\varphi \co (S,F) \to (S',F')$ consists of a function
$\varphi^\st \co S \to S'$ and a family of functions 
  $\varphi^\op = 
  \{ \varphi^\op_{w\aar s} \co F_{w\aar s} \to F'_{\varphi^\st (w) \aar
    \varphi^\st (s)} \mid w\in S^*, s\in S \}$.
Signature morphisms compose component-wise; we skip the
straightforward technical details here.
The signature morphisms have all compositionality properties of the
functions. 

\subsubsection{Sentence translations}
Any \MSA\ signature morphism $\varphi \co (S,F) \to (S',F')$ induces a
translation function from the set of the $(S,F)$-sentences to the set
of the $(S',F')$-sentences.
In brief, these translations just rename the sort and the function
symbols according to the respective signature morphism. 
They can be formally defined by induction on the structure of the
sentences. 
The details of this can be read from the literature
(eg. \cite{iimt}). 
Let us denote by $\varphi \rho$ the translation of an $(S,F)$-sentence
$\rho$ along a signature morphism $\varphi$. 

\subsubsection{Model reducts}
For each signature morphism $\varphi \co (S,F) \to (S',F')$, the
\emph{$\varphi$-reduct} $\varphi A'$ of an $(S',F')$-algebra $A'$ is an
$(S,F)$-algebra defined by $(\varphi A')_x = A'_{\varphi x}$ for each
sort or function symbol $x$ from $(S,F)$.
Conversely, $A'$ is called the \emph{$\varphi$-expansion} of $A$.
These concepts extend to model homomorphisms. 
Given an $(S',F')$-homomorphism $h' \co A' \to B'$, its
$\varphi$-reduct is the model homomorphism $h \co \varphi A' \to
\varphi B'$, denoted $\varphi h'$, defined for each sort $s \in S$ by
$h_s = h'_{\varphi^\st s}$. 

\subsubsection{Assignment evaluations}
Given a signature $(S,F)$, a set $X$ of variables for $(S,F)$, an
$(S,F)$-algebra $B$, and a mapping $\theta \co X \to B$, for any $(S,F
\cup X)$-term $t$ by $\theta t$ we designate $B'_t$ where $B'$ is the
$(S,F \cup X)$-expansion of $B$ such that $B'_x = \theta x$ for each
$x \in X$. 
When $B$ is the term algebra $0_{(S, F \cup Y)}$ then $\theta$ is a
substitution $X \to Y$ and $\theta t$ is precisely the instance of $t$
by $\theta$ as defined above.
Thus, in this way the mappings $\theta \co X \to B$ can be regarded as
a generalisation of the concept of substitution. 

\subsubsection{The Satisfaction Lemma}
It says that:

\begin{theorem}\label{sat-lem-msa}
For each \MSA\ signature morphism $\varphi \co (S,F) \to
(S',F')$, for each $(S,F)$-sentence $\rho$ and each $(S',F')$-algebra
$A'$
\[
A' \models_{(S',F')} \varphi \rho
\mbox{ \ if and only if \ }
\varphi A' \models_{(S,F)} \rho.
\]
\end{theorem}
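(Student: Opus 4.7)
The plan is to proceed by induction on the structure of the $(S,F)$-sentence $\rho$, preceded by a term-level lemma that handles the interaction between the syntactic translation $\varphi$ and the semantic reduct $\varphi A'$ on term interpretations.

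First I would establish that for every $(S,F)$-term $t$, $(\varphi A')_t = A'_{\varphi t}$, where $\varphi t$ denotes the $(S',F')$-term obtained from $t$ by renaming each operation symbol $\sigma$ to $\varphi^\op \sigma$ (sort symbols are renamed by $\varphi^\st$ implicitly, via the sorts of the subterms). This is a routine structural induction on $t$: for $t = \sigma(t_1, \dots, t_n)$, unfold $(\varphi A')_\sigma = A'_{\varphi^\op \sigma}$ by definition of the reduct, apply the inductive hypothesis to each $t_i$, and repackage. With this, the base case of atomic equations is immediate: $A' \models_{(S',F')} \varphi(t = t')$ iff $A'_{\varphi t} = A'_{\varphi t'}$ iff $(\varphi A')_t = (\varphi A')_{t'}$ iff $\varphi A' \models_{(S,F)} t = t'$. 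The propositional inductive cases (conjunction, disjunction, implication, negation) then follow directly from the corresponding clauses in the definition of $\models$, since translation distributes over the Boolean connectives.

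The main obstacle is the quantifier case, say $\rho = \lc{\forall{X}\rho_0}$. Its translation is $\lc{\forall{\varphi X} \varphi'\rho_0}$, where $\varphi X = \{(x, \varphi^\st s) \mid (x,s)\in X\}$ is viewed as a set of variables for $(S',F')$, and $\varphi'\co (S, F\cup X) \to (S', F'\cup \varphi X)$ is the canonical extension of $\varphi$ acting identically on variable names. The crux is to exhibit a bijection between the $(S, F\cup X)$-expansions of $\varphi A'$ and the $(S', F'\cup \varphi X)$-expansions of $A'$ that commutes with $\varphi'$-reduction: given an expansion $A''$ of $A'$ interpreting each $(x, \varphi^\st s)\in \varphi X$, define an expansion $B$ of $\varphi A'$ by $B_{(x,s)} = A''_{(x, \varphi^\st s)}$; the inverse assignment is symmetric. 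A short check shows $\varphi' A'' = B$, so applying the inductive hypothesis to $\rho_0$ on each matched pair yields that the two universal quantifications agree. The existential case then reduces to the universal one via the defined equivalence $\lc{\exists{X}\rho_0} \equiv \lc{\not \forall{X}\not \rho_0}$ together with the negation case already handled.

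The only genuinely delicate points are verifying that the extended morphism $\varphi'$ is well-defined (which relies on the disjointness baked into the notation $F\cup X$ and on the sort-respecting definition of $\varphi X$) and that the bijection on expansions is exactly $\varphi'$-reduction on the nose; everything else in the argument is bookkeeping that follows the inductive definitions of translation, reduct, and satisfaction.
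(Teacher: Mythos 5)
Your proof is correct; note that the paper itself gives no proof of this theorem but defers to the literature, and your argument --- a term-evaluation lemma $(\varphi A')_t = A'_{\varphi t}$, the atomic and Boolean cases read off from the satisfaction clauses, and a sort-respecting bijection between the $(S,F\cup X)$-expansions of $\varphi A'$ and the $(S',F'\cup\varphi X)$-expansions of $A'$ commuting with $\varphi'$-reduction for the quantifier case --- is exactly the standard proof found in those references. The one point that genuinely needs care, which you already flag, is that $\varphi X$ must again be a legitimate set of variables for $(S',F')$ (no clash of $(x,\varphi^{\mathrm{st}}s)$ with constants of $F'$), which a fully formal treatment secures by renaming bound variables; with that convention in place the rest is the bookkeeping you describe.
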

Proofs of this result can be found in several places in the
literature, eg. \cite{ins,iimt}. 

The capture of the \MSA\ signatures and their morphisms as a category,
of the collection of sets of sentences and their translations  as a
functor, of the collection of categories of models and their reducts
as another functor, together with the Satisfaction Lemma, is what
makes \MSA\ an \emph{institution}.  
As shown in the literature this has vast theoretical and practical
consequences which we do not discuss here.
In this work we do not involve any institution theory, however we want 
the reader to be aware in general of the importance of having model
theoretic frameworks captured as institutions, and in particular of
the importance of having the model-theoretic framework of our work
captured as an institution.  

\section{Many-sorted Preordered Algebra}

The structure of `preordered algebra', has already been present in
works regarding model theoretic semantics of transitions
(e.g. \cite{cafefun,hpoa,CodescuMRM10,CodescuMRM11})
and has also been used frequently as one of the benchmark examples for
institutional model theoretic developments
\cite{edins,upins,gaina-petria2010,codescu-gaina2008,gaina-popescu2004}.  
Much of the model theory of \POA\ can be found in \cite{iimt}.
Although it has its own specific computing science meaning, model
theoretically \POA\ is very much an extension of \MSA.
Here we upgrade and refine the established definitions of \POA\ in
order to make it more usable in the applications.
The most important upgrade -- that triggers also most of the other
upgrades -- is that we will make an explicit
distinction between two kind of sorts, one for data types (like in
\MSA) and another kind for transitions. 
Only the latter sorts are interpreted as preorders.
Although all developments of our paper can be done within the context
of the simpler form of \POA, only with the upgraded \POA\ we will be
able to match properly the mathematical theory to the specification
examples.   

\subsection{\POA\  definitions}
They follow the structure of the \MSA\ definitions and also include
them. 
Therefore we will provide explicitly only the definitions of the parts
that are specific to \POA.  

\begin{definition}[\emph{POA} signatures]
A \emph{\emph{POA} signature} is a triple $(D,S,F)$ where $D$, $S$ are
sets such that $D \cap S = \emptyset$ and $(D \cup S, F)$ is an
\emph{MSA} signature.
The elements of $D$ are called \emph{data sorts} while the
elements of $S$ are called \emph{system sorts}. 
\end{definition}

The \POA\ concepts of term, variable, substitution, are all just the
corresponding \MSA\ concepts obtained by regarding the \POA\
signatures $(D,S,F)$ as \MSA\ signatures $(D \cup S, F)$. 

\begin{definition}[\POA\ sentences]\label{poa-sent}
Given a \POA\ signature $(D,S,F)$, the \emph{$(D,S,F)$-sentences} are
defined like \emph{MSA} $(D\cup S,F)$-sentences with one important
difference: instead of one kind of atoms like in \emph{MSA}, in \POA\
we have two kinds of atoms: 
\begin{itemize}
 
\item \emph{MSA} (atomic) \emph{equations} $t = t'$, where $t,t'$ are
  $(D\cup S,F)$-terms of the same sort, and 

\item (atomic) \emph{transitions} $\lc{t \trans t'}$ where $t, t'$ are
  $(D\cup S,F)$-terms of the same \emph{system} sort.\index{transition} 

\end{itemize}
A  \emph{Horn $(D,S,F)$-sentence}\index{Horn sentence in \POA} is a
sentence of the form $\hornsen{X}{H}{C}$ where $H$ is a finite  
conjunction of atoms and $C$ is a single atom.
A \emph{$(D,S,F)$-equation} is just a $(D \cup S,F)$-equation. 
A \emph{$(D,S,F)$-transition} is any Horn $(D,S,F)$-sentence
$\hornsen{X}{H}{(\lc{t \trans t'})}$. 
\end{definition}



  


In some situations it is useful to be able to separate in the
conditions $H$ the equations from the transitions.

\begin{notation}
  Given a finite conjunction $H$ of atomic sentences we let
  \begin{itemize}

  \item 
    $\heq$ be the \emph{set} of the equations occurring in $H$, and

  \item $\htr$ be the \emph{set} of the transitions occurring in $H$.

  \end{itemize}
\end{notation}

\begin{definition}[\POA\ models]
Given a \POA\ signature $(D,S,F)$, a \emph{$(D,S,F)$-(preordered)
  algebra} $(A,\leqslant)$ consists of
\begin{itemize}

\item an \emph{MSA} $(D\cup S, F)$-algebra $A$, and

\item a family $\leqslant \ = \ (\leqslant_s)_{s\in S}$ such that each
  $(A_s, \leqslant_s)$ is a preordered set
  
\end{itemize}
such that for each $\sigma \in F_{w \aar s}$ if $s \in S$ then
$A_\sigma$ is monotone.
\end{definition}

\begin{definition}[\POA\ model homomorphism]
Given $(D,S,F)$-algebras $(A,\leqslant)$ and $(B,\leqslant)$, a \emph{\POA\
  homomorphism} $h \co (A,\leqslant) \to (B,\leqslant)$ is an \MSA\ homomorphism
$h \co A \to B$ that is monotone on the system sorts, ie. $a \leqslant a'$
implies $ha \leqslant ha'$. 
\end{definition}

The \POA\ concepts of isomorphism and initiality are like the
respective \MSA\ concepts.
In fact both concepts are category-theoretic concepts, so their
definition is independent of the actual model theoretic framework.
However there is an important difference between \MSA\ and \POA\ with
respect to model isomorphisms: in \POA\ Fact \ref{iso-bij-fact} does
\emph{not} hold!
The concept of \POA\ model isomorphism is strictly stronger than that
of bijective homomorphism, in other words a bijective \POA\
homomorphism is not necessarily a \POA\ isomorphism.

\begin{example}
Let us consider the signature with one sort and two constants $a$ and
$b$, and on the one hand the \POA\ algebra $A$ consisting of two
elements, $A_a$ and $A_b$ and an empty preorder relation, and on the
other hand the \POA\ algebra $B$ consisting of two elements $B_a$ and
$B_b$ such that $B_a \leqslant B_b$.
Then the unique homomorphism $h \co A \to B$ is bijective but its
inverse as a function is \emph{not} a \POA\ homomorphism because it is
not monotone. 
\end{example}

\begin{definition}[\POA\ satisfaction]
Given a \POA\ signature $(D,S,F)$, the satisfaction between
the $(D,S,F)$-algebras and $(D,S,F)$-sentences is obtained by
extending the \emph{MSA} satisfaction between $(D \cup S, F)$-algebras
and $(D \cup S,F)$-sentences with the satisfaction of atomic
transitions:   
\[
 (A,\leqslant)\models_{(D,S,F)} \lc{t \trans t'} 
\mbox{ \ if and only if \ } A_t \leqslant A_{t'}.
\]
and by following the same inductive process on the structure of the
sentences like in \MSA. 
\end{definition}

We end this section with a couple of examples.

\begin{example}\label{bubblesort-ex}
The bubble sort algorithm for sorting lists of natural numbers admits
a very compact and clear coding in \POA\ as follows.\footnote{We use
  the Maude notations that are close to the common mathematical
  notations as follows. A module definition starts with its name (we
  use the keyword \ttbf{mod} without respect of the semantic nature of
  the respective module), then  we have data type import declarations,
  then sort symbols declarations (keyword \ttbf{sort}), then operation
  symbols declarations (keyword \ttbf{op}) that follow the usual
  functional notation from mathematics that was introduced by Euler,
  then variables declarations (keyword \ttbf{var}) in which the sort
  of each variable is given. These give the specification of the
  signatures. By default convention a sort is a system sort if and
  only if there exist a transition with terms of that sort. The
  equations / transitions are specified by the keywords \ttbf{eq} /
  \ttbf{rl} (or \ttbf{ceq} / \ttbf{crl} in the conditional
  case). \texttt{=>} is used for transitions. The conditions are given
  as Boolean terms after the keyword \ttbf{if}.}  
\begin{maude}
mod BUBBLE-SORT is
  protecting LIST{Nat} .
  vars m n : Nat .
  crl m n => n m if n < m .
endm
\end{maude}
In this specification there is an import of data using the keyword
\ttbf{protecting}.
The data is the lists of the natural numbers.
This has two types:
\begin{enumerate}

\item The natural numbers \texttt{NAT}.
  There are two ways to consider this, either as user defined or as
  predefined.
  In the former approach \texttt{NAT} is an equational specification
  of the natural numbers as can be found in many places in the
  algebraic specification literature.
  Although this is a nice example to illustrate concepts from the
  theory of the initial data type specification, the real world way is
  to consider \texttt{NAT} as a predefined type.
  Practically this means that \texttt{NAT} is already made available
  by the system.
  In the case of Maude and CafeOBJ it actually comes from the
  underlying implementation language, which is C ultimately.
  In terms of the mathematical semantics this means that we have a
  signature that consists of one sort \texttt{Nat} and some usual
  operations on the natural numbers.\footnote{We take here a
    minimalist approach that does not consider order sorted
    structures.}
  For the purpose of this example we only need the predicate ``less
  than'' which is encoded as a Boolean valued operation. In Maude
  notation we can write this as
\begin{maude}
  sort Nat .
  op _<_ : Nat Nat -> Bool .
\end{maude}
Next we consider a standard interpretation of the natural numbers as an
\MSA\ algebra, where \texttt{Nat} is interpreted as $\omega$ (the set
of the natural numbers) and \texttt{\_<\_} is interpreted as the
``less then'' relation on $\omega$.

\item The list of the natural numbers \texttt{LIST\{Nat\}}.
  In Maude and CafeOBJ this is also a predefined module but in a
  different way than \texttt{NAT} is.
  It does not come from C, but it is rather defined as an equational
  specification. It is predefined in the sense that is already
  available in the system.
  Here we give it the simplest possible treatment:
\begin{maude}
  sort List{Nat} .
  op _ : Nat -> List{Nat} .
  op nil : -> List{Nat} .
  op __ : List{Nat} List{Nat} -> List{Nat} [assoc] .
\end{maude}
The first operation provides the generators for the lists, which are
the natural numbers regarded as lists of size 1.
In most algebraic specification languages this is handled more
elegantly by declaring \texttt{Nat} as a \emph{subsort} of
\texttt{List\{Nat\}}.
However this requires \emph{order sorted algebra} \cite{osa-survey}
which is a refinement of \MSA\ that is outside the scope of our
discussion here. 
The constant \texttt{nil} represents the empty list and the binary
operation represent the concatenation (written in mixfix syntax).
The word \texttt{assoc} abbreviates the equation specifying the
associativity of the binary operation. 
The considered \MSA\ algebra interprets as the set of lists of natural
numbers (denoted $\omega^*$) and \texttt{\_\_} as lists
concatenation. 
  
\end{enumerate}
These are \emph{common to all models of} \texttt{BUBBLE-SORT}; this is
the meaning of \ttbf{protecting}.
From now on we can have various \POA\ models that satisfy the
transition specified in the module.
But before presenting some of them it is important to clarify the $D$
and the $S$ from the definition of the \POA\ signatures.
$D = \{ \texttt{Nat}, \texttt{Bool} \}$ and $S = \{
\texttt{List\{Nat\}} \}$ which means   that the \POA\ models do not
consider a preorder on $\omega$, the   interpretation of
\texttt{Nat}.
$F$ consists of all operations of \texttt{NAT}, whatever they may be,
\texttt{\_<\_} and the two \texttt{LIST} operations. 
What happens is that we start with a \MSA\ algebras, the set of sorts
being $D \cup S$, and then we consider $(D,S,F)$-algebras by adding
appropriate preorders.
Let us describe some of the $(D,S,F)$-algebras thus obtained.
\begin{enumerate}

\item The preorder on $\omega^*$ is the total preorder, i.e. $l
  \leqslant l'$ for all $l,l' \in \omega^*$.

\item The preorder on $\omega^*$ is given by the reflexive-transitive
  closure of the relation
  \[
  (l \ m \ n \ l') \lra (l \ n \ m \ l')
  \]
  where $l,l' \in \omega^*$, $m,n \in \omega$.

\item The preorder $\leqslant$ on $\omega^*$ is given by 
  \[
    l \leqslant   l' \mbox{ \  if and only if \ } \mathrm{inv}(l) \leq
      \mathrm{inv}(l')
    \]
where $\mathrm{inv}(l)$ represents the number of the inversions in
$l$. 

\end{enumerate}
If we match the transition specified by \texttt{BUBBLE-SORT} to the
general definition $\hornsen{X}{H}{t \trans t'}$ we have that
$X$ is $\{ m, n \}$, $H$ is the equation $(\texttt{n} < \texttt{m}) =
\texttt{true}$, $t$ is $(\texttt{m n})$ and $t'$ is
$(\texttt{n m})$. 
The first two models satisfy the transition of \texttt{BUBBLE-SORT}
while the third model does not satisfy it.\footnote{Since for instance
  $(2 \ 1) \not\leqslant (1 \ 2)$ which means that for $m=2, n=1$ the
  condition of the transition holds while the transition itself does
  not hold.} 
\end{example}

\begin{example}\label{sol-ex}
  Let us consider the following simple specification:
\begin{maude}
mod SOLUTIONS is 
  sort s . 
  op f : s -> s .
  var x : s .
  rl f(x) => x .
endm
\end{maude}
From the wide class of the \POA\ models for the signature of this
specification we consider the following ones:
\begin{enumerate}

\item The model $(A,\leqslant_A)$: $A_s = \mathbb{Z}$ (the set of the
  integers), $A_f (n) = n^2$, $m \leqslant_A n$ if and only if
  $n^{2^k} = m$ for some $k \in \omega$.

\item The model $(B,\leqslant_B)$: $B_s = \mathbb{Z}$, $B_f (n) =
  n^2$, $m \leqslant_B n$ if and only if $|n| \leqslant |m|$.

\item The model $(C,\leqslant_C)$: $C_s = \mathbb{Z}$, $C_f (n) =
  n^2$, $m \leqslant_C n$ if and only if $m$ divides $n$.  
  
\end{enumerate}
While the first two models do satisfy the transition of
\texttt{SOLUTIONS} the third one does not.\footnote{Since for instance
$C_f (3) = 9 \not\leqslant_C 3$.} 
\end{example}  

\subsection{\POA\ as institution}
\label{poa-institution}

\begin{definition}
A \POA\ signature morphism $\varphi \co (D,S,F) \to (D',S',F')$ is
just an \MSA\ signature morphism $\varphi \co (D \cup S,F) \to (D'
\cup S', F')$ such that $\varphi^\st D \subseteq D'$ and $\varphi^\st
S \subseteq S'$. 
Moreover
\begin{itemize}

\item Each \POA\ $(D,S,F)$-sentence $\rho$ gets translated to a
  $(D',S',F')$-sentence $\varphi \rho$ by following the \MSA\
  translation of sentences that correspond to $\varphi$ regarded as an
  \MSA\ signature morphism $(D\cup S, F) \to (D' \cup S',F')$.
  The atomic transitions are translated in the same way as the atomic
  equations are translated.

\item Each \POA\ $(D',S',F')$-algebra $(A',\leqslant)$ gets reduced to
  the $(D,S,F)$-algebra $\varphi(A',\leqslant) = 
  (\varphi A', \leqslant)$.  
  The reducts of \POA\ $(D',S',F')$-homomorphisms are defined as the
  reducts of the underlying \MSA\ $(D' \cup S', F')$-homomorphisms.   
  
\end{itemize}
\end{definition}

With all these the important `Satisfaction Lemma' of Theorem
\ref{sat-lem-msa} gets extended to \POA\ in a straightforward way. 

\begin{theorem}
For each \POA\ signature morphism $\varphi \co (D,S,F) \to
(D',S',F')$, for each $(D,S,F)$-sentence $\rho$ and each
$(D',S',F')$-algebra $(A',\leqslant)$ 
\[
(A',\leqslant) \models_{(D',S',F')} \varphi \rho
\mbox{ \ if and only if \ }
\varphi (A',\leqslant) \models_{(D,S,F)} \rho.
\]
\end{theorem}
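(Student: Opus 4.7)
The plan is structural induction on the $(D,S,F)$-sentence $\rho$. Because POA sentences differ from MSA sentences only by admitting atomic transitions as a second kind of atom, and because the POA reduct of $(A',\leqslant)$ is defined to copy the preorder data unchanged onto the underlying MSA reduct $\varphi A'$, the proof splits into a new base case (transitions) and an inductive step that largely transports the MSA Satisfaction Lemma (Theorem \ref{sat-lem-msa}) to the POA setting.

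For the new base case, consider an atomic transition $\lc{t \trans t'}$ between $(D \cup S, F)$-terms of a system sort $s$. By the translation definition, $\varphi \lc{t \trans t'} = \lc{\varphi t \trans \varphi t'}$, a transition of sort $\varphi^\st s \in S'$. By POA satisfaction, $(A',\leqslant) \models \lc{\varphi t \trans \varphi t'}$ iff $A'_{\varphi t} \leqslant'_{\varphi^\st s} A'_{\varphi t'}$, while $\varphi(A',\leqslant) \models \lc{t \trans t'}$ iff $(\varphi A')_t \leqslant_s (\varphi A')_{t'}$. A straightforward induction on term structure, analogous to Lemma \ref{term-pres-lem}, yields $(\varphi A')_t = A'_{\varphi t}$, and the very definition of the POA reduct gives $\leqslant_s \ = \ \leqslant'_{\varphi^\st s}$ on the relevant carrier. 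These two identifications collapse the two satisfaction conditions to the same inequality.

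The MSA (atomic) equation case is covered directly by Theorem \ref{sat-lem-msa}. The Boolean connectives are immediate, since both sides of the desired equivalence unfold via the same Boolean operation on the satisfaction of the immediate subformulas, so the inductive hypothesis closes these cases mechanically. For the quantifier cases, the key observation is that POA $(D', S', F' \cup \varphi X)$-expansions of $(A', \leqslant)$ biject naturally with POA $(D, S, F \cup X)$-expansions of $\varphi(A', \leqslant)$: since $X$ adds only new constants of existing sorts, no new preorder data is imposed, and the reduct of a POA expansion agrees with the corresponding expansion of the POA reduct both on the MSA part (by the MSA argument) and on the preorders (tautologically). Applying the inductive hypothesis to each matched pair of expansions settles the universal case, and the existential case reduces to it through the negation clause.

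The only real subtlety, and the step that one should write out carefully, is the commutation between POA reduct and variable-expansion used in the quantifier case: one must check that the POA $(D,S,F \cup X)$-algebra obtained by first expanding $(A',\leqslant)$ along $\varphi \cup 1_X$ and then reducing equals the POA expansion of $\varphi(A',\leqslant)$ that interprets each variable by the chosen element, with preorders matching on system sorts. Given that our POA signature morphisms preserve the data/system partition and that POA reducts transport $\leqslant$ without modification, this is a routine verification, after which the argument runs exactly parallel to the MSA proof.
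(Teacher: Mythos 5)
Your proof is correct and is precisely the ``straightforward extension'' the paper has in mind: the paper itself omits the proof, merely asserting that Theorem \ref{sat-lem-msa} carries over, and your induction---with the new atomic-transition base case handled via $(\varphi A')_t = A'_{\varphi t}$ and the fact that the reduct transports $\leqslant$ unchanged on system sorts (using $\varphi^\st S \subseteq S'$), plus the expansion--reduct correspondence for quantifiers---is exactly how that extension is supposed to go. No discrepancy with the paper's approach.
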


\subsubsection*{A comparison between \POA\ and other semantic
  approaches to non-deterministic rewriting}
In \cite{meseguer92} the `rewriting logic' (abbreviated \RWL) was
introduced as a logical foundations for non-deterministic rewriting.
Then this was apparently implemented in the form of the Maude language
\cite{maude-book}.
However right from its inception \RWL\ has suffered a number of
mathematical shortcomings that propagated into a series of problems at
the more practical levels.
These culminated to the conclusion put forward by
\cite{CodescuMRM10,CodescuMRM11} that \RWL\ cannot be the logic / model
theory of Maude, and instead the only such possibility is provided by
\POA. 
But due to the historically important role played by \RWL\ it is worth to
look into the (mathematical) relationship and differences between
\RWL\ and \POA. 
\begin{enumerate}

\item While \RWL\ is mathematically complicated as it involves
  sophisticated category-theoretic structures (e.g. sub-equalisers,
  etc.), \POA\ achieves mathematical simplicity.
  In itself this may not be that important if it did not relate to
  other aspects discussed below.

\item While \RWL\ fails the Satisfaction Lemma of institution
  theory,\footnote{This failure is common knowledge among experts of
    the area. However as a statement it appears rather scarcely in the
    literature. But it can be found explicitly in \cite{iimt} and
    implicitly in \cite{CodescuMRM10,CodescuMRM11}.} all versions of
  \POA\ are institutions.
  In other words, while \POA\ belongs to the wide family of
  specification and programming logics that are captured as
  institutions \cite{ins,iimt,sannella-tarlecki-book}, \RWL\ does not.  
  The advantages of being an institution are rather vast.
  On the practical side, the design of any language that implements
  \POA\ rigorously would benefit directly from the rich institution
  theoretic modularisation theory.
  On the model theoretic side, \POA\ benefits directly from the
  in-depth axiomatic developments provided by institution
  theory.\footnote{A relevant monograph containing application to
    the model theory of \POA\ in its abridged form is
    \cite{iimt}. Since its publication the institution-theoretic
    approach to model theory has developed further.}
  Moreover, there is a strong interdependency relationship between
  the two sides.

\item In the seminal work on \RWL\ \cite{meseguer92} and also in
  subsequent works it is presented and discussed a concept of
  projection from the categories of models of \RWL\ to categories of
  models of \POA\ that essentially collapses parallel transitions
  (arrows) to a preorder relation.
  In this way a model of \RWL\ gets mapped to a preordered algebra.
  But as often happens in mathematics, simplifying structure does not
  necessarily lead to a theory that is a special case of the former,
  on the contrary it may lead to stronger and more useful properties.
  This projection has lead to a widespread mistaken belief that as a
  model theory \POA\ is a collapsed form of \RWL\ and consequently
  concepts   and results from the former can be automatically
  transfered to the latter. 
  There are rigorous arguments that refute such views such as in
  \cite{CodescuMRM10,CodescuMRM11} where, for instance, it is shown
  that the consequence relation in \POA\ is \emph{different}
  (i.e. larger) that what is obtained by collapsing \RWL.
  This situation strengthen claims of novelty regarding results
  obtained directly in \POA.\\
  The mathematical explanation for this important discrepancy is as
  follows.
  The proper way to establish the relationship between the two
  model theories is through the mathematics of institution  
  \emph{(co)morphisms} \cite{ins,goguen-rosu2000,iimt}, but in the case
  of \POA\ and \RWL\ such an attempt fails immediately due to
  the latter formalism not being an institution.
  Naturally, this discrepancy between \RWL\ and \POA\ propagates to
  computational aspects. 

\item From the side of the logical languages, in some sense \POA\ is
  significantly richer than \RWL\ because while \POA\ supports a
  fully fledged first-order language, \RWL\ sentences are restricted
  to a proper sub-class of Horn sentences. 
  For instance Horn sentences such as
  $\hornsen{X}{(t \trans t')}{(t_1 = t_2)}$ are not supported in \RWL\
  because of the hierarchical built of \RWL\ in which
  the logic of transitions is built on top of equational logic.
  But these sentences can be very important to have in the
  applications.
  For instance when verifying properties of algorithms one needs to
  express that a transition has a certain property, for instance that
  a certain (semi-)invariant holds.
  Such properties are logically encoded as equations conditioned by
  transitions.
  As an example we may consider the termination of
  \texttt{BUBBLE-SORT}.
  If for each string $s$ we let $i(s)$ denote the number of its
  inversions, then in the logical language we should be able to
  express
  \[
    \hornsen{s,s'}{(s \trans s')}{(i(s) > i(s') =
      \texttt{true})}.
  \]
  It should be noted that while such sentences are supported in
  CafeOBJ they are not supported in Maude due to the latter's design 
  commitment to \RWL. 
    
\end{enumerate}

In the recent paper \cite{meseguer2020} another semantic framework for
non-deterministic rewriting is introduced, namely
\emph{transition systems}. 
As mathematical structures these are significantly weaker than
preordered algebras as they are just algebras enhanced with binary
relations without preorder and monotonicity requirements.
So, in a sense we can say that \POA\ represents a middle
ground between \RWL\ and the semantics put forward in
\cite{meseguer2020}.

\section{Rewriting as Adjunction}

In this section we do the following:
\begin{enumerate}

\item We define on any algebra $B$ the rewriting relation
  $\rew_{\Gamma/B}$ determined by a set $\Gamma$ of transitions.

\item We prove that $(B,\rew_{\Gamma/B})$ is a preordered algebra that
  satisfies $\Gamma$.

\item We prove that the preordered algebras $(B,\rew_{\Gamma/B})$
  arise as a left adjoint functor, a property that can be regarded as
  an implicit denotational definition of rewriting.
  
\end{enumerate}
In this section $\Gamma$ denotes a set of $(D,S,F)$-transitions. 

\subsection{The explicit definition of algebraic rewriting}


The following technical concept lies at the heart of rewriting. 

\begin{definition}[Rewriting contexts] 
Given an \MSA\  signature $(S,F)$, an $(S,F \cup \{ z \})$-term $c$
over the signature extended with a new variable $z$ 
is a \emph{(rewriting) $(S,F)$-context}
\index{rewriting context} \index{context, for rewriting}
if 
\begin{itemize}
 
\item[--] $c=z$, or
\item[--] $c= \sigma(c_1,\dots,c_n)$ such that $\sigma\in F_{w \aar
    s}$ is an operation symbol and there exists exactly one $k\in
  \{1,\dots,n\}$ such that $c_k$ is context, with $c_i$ being just
  $(S,F)$-terms for $i\not= k$.  
\end{itemize}
Then $c_k$ is called the \emph{immediate sub-context}
\index{immediate sub-context} of $c$.
\end{definition}

\begin{notation}
We denote contexts by $c[z]$ where $c$ is the actual context as a
term and $z$ is the variable of the context. 
\end{notation}

\begin{definition}
Let $c[z]$ be a context of sort $s'$ such that the variable $z$ is of
sort $s$, and let $B$ be any \MSA\ algebra for the respective
signature.
The $B_c \co B_s \to B_{s'}$ is the function defined as follows:
\begin{itemize}

\item[--] If $c[z]= z$ then $B_c$ is an identity function.

\item[--] If $c = \sigma(c_1, \dots, c_k [z], \dots, c_n)$ where
  $\sigma$ is an operation symbol and $c_k [z]$ is the immediate
  sub-context of $c[z]$ then
  \[
  B_c (b) = B_\sigma (B_{c_1}, \dots, B_{c_k} (b), \dots, B_{c_n}). 
  \]
  
\end{itemize}
\end{definition}

\begin{notation}
  For any set $B$, we let $\Delta_B$ denote its \emph{diagonal}, i.e.
  $\Delta_B = \{ (b,b) \mid b \in B \}$.
  This notation extends to algebras by considering their underlying
  sets.
\end{notation}

\begin{definition}
Let $B$ be a $(D,S,F)$-algebra and let $\Omega \subseteq B \times B$. 
By $\Gamma(\Omega)$ we denote the set
\[
  \begin{array}{rl}
\{  (B_c (\theta t), B_c (\theta t')) \mid &
 c[z] \text{ applicable context\footnotemark
                                             of a \emph{system sort}, }
 \hornsen{X}{H}{(t \trans t')} \in \Gamma, \\
              & \theta \co X \to B, \
                \theta \heq \subseteq \Delta_B, 
                \theta \htr \subseteq \Omega \}.
\end{array}
\]
\footnotetext{The sort $z$ is the same with the sort of $t$ and $t'$.}
\end{definition}

In what follows the role of $\Omega$ will be played only by relations
representing bounded applications of transitions on $B$. 

The following provides the definition of a rewriting relation on $B$
determined by a set $\Gamma$ of transitions.

\begin{definition}
Let $\{ \Gamma_{B,k} \mid k \in \omega \}$ be the following
inductively defined sequence of binary relations on $B$:
\begin{itemize}

\item $\Gamma_{B,0} = \Delta_B$, 

\item $\Gamma_{B, k+1} = \Gamma_{B, k}^* \ \cup  \ \Gamma(\Gamma_{B,
    k}^*)$. 
  
\end{itemize}
For any $b,b' \in B$ we let $b \lra_{\Gamma/B} b'$ when there
  exists $k$ such that $(b,b') \in \Gamma(\Gamma_{B,k}^*)$.
  Then $\rew_{\Gamma/B}$ denotes the reflexive-transitive closure of
  $\lra_{\Gamma/B}$. 
\end{definition}

\begin{proposition}
  $b \rew_{\Gamma/B} b'$ if and only if
  $(b,b') \in \bigcup_{k\in \omega} \Gamma_{B,k}$.
\end{proposition}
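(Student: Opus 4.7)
The plan is to show the two inclusions separately, using the monotonicity of the sequence $(\Gamma_{B,k})_{k\in\omega}$ together with the monotonicity of the operator $\Gamma(\cdot)$ (both facts follow immediately from definitions: $\Gamma_{B,k}\subseteq\Gamma_{B,k}^*\subseteq\Gamma_{B,k+1}$, and if $\Omega\subseteq\Omega'$ then replacing $\theta\htr\subseteq\Omega$ by $\theta\htr\subseteq\Omega'$ only enlarges the resulting set).

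For the inclusion $\bigcup_{k\in\omega}\Gamma_{B,k}\subseteq{\rew_{\Gamma/B}}$, I would prove by induction on $k$ that $\Gamma_{B,k}\subseteq{\rew_{\Gamma/B}}$. The base case $k=0$ is just reflexivity of $\rew_{\Gamma/B}$, since $\Gamma_{B,0}=\Delta_B$. For the step, assuming $\Gamma_{B,k}\subseteq{\rew_{\Gamma/B}}$, and using that $\rew_{\Gamma/B}$ is reflexive and transitive, we obtain $\Gamma_{B,k}^*\subseteq{\rew_{\Gamma/B}}$; and any pair in $\Gamma(\Gamma_{B,k}^*)$ is a $\lra_{\Gamma/B}$-step by the very definition of $\lra_{\Gamma/B}$, hence also in $\rew_{\Gamma/B}$. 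Combining the two, $\Gamma_{B,k+1}=\Gamma_{B,k}^*\cup\Gamma(\Gamma_{B,k}^*)\subseteq{\rew_{\Gamma/B}}$.

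For the reverse inclusion, I would observe that $U:=\bigcup_{k\in\omega}\Gamma_{B,k}$ is itself reflexive and transitive: reflexivity holds because $\Delta_B=\Gamma_{B,0}\subseteq U$; transitivity follows from the monotonicity of the sequence, as any two pairs $(a,b)\in\Gamma_{B,k_1}$ and $(b,c)\in\Gamma_{B,k_2}$ lie jointly in $\Gamma_{B,k}$ for $k=\max(k_1,k_2)$, so $(a,c)\in\Gamma_{B,k}^*\subseteq\Gamma_{B,k+1}\subseteq U$. Next, any single $\lra_{\Gamma/B}$-step belongs to $\Gamma(\Gamma_{B,k}^*)\subseteq\Gamma_{B,k+1}\subseteq U$ for some $k$. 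Since $\rew_{\Gamma/B}$ is by definition the least reflexive and transitive relation containing $\lra_{\Gamma/B}$, we conclude $\rew_{\Gamma/B}\subseteq U$.

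The argument is essentially bookkeeping; the only point requiring a bit of care is the observation that the right-hand side $U$ is already closed under reflexivity and transitivity, which is what lets the ``outer'' closure in the definition of $\rew_{\Gamma/B}$ be absorbed into the stratification. No deeper property of the operator $\Gamma(\cdot)$ (for instance, that it respects reflexive-transitive closure) is needed for this proposition; monotonicity alone suffices.
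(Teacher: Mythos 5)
Your proof is correct and follows essentially the same route as the paper: the same induction on $k$ for the inclusion $\bigcup_k\Gamma_{B,k}\subseteq{\rew_{\Gamma/B}}$, and the same key facts (the sequence is increasing and $\Gamma_{B,k}^*\subseteq\Gamma_{B,k+1}$) for the converse. The only cosmetic difference is that for the converse the paper unfolds an explicit $\lra_{\Gamma/B}$-chain and takes the maximum of the indices, whereas you package the identical computation as the observation that the union is already reflexive and transitive and then invoke the minimality of the reflexive-transitive closure.
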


\begin{proof}
  For the implication from the left to the right we consider the chain 
  \[
  b=b_1 \lra_{\Gamma/B} b_2 \lra_{\Gamma/B} \dots \lra_{\Gamma/B} b_n = b'.
\]
For each $i\in [n-1]$ there exists $k_i$ such that $(b_i, b_{i+1})
\in \Gamma_{B,{k_i}}$.
Let $k = \max\limits_{i\in [n-1]} k_i$.
Since $(\Gamma_{B,j})_{j\in \omega}$ is increasing it follows that for
each $i\in [n-1]$, $(b_i, b_{i+1}) \in \Gamma_{B,k}$, hence $(b,b') \in
\Gamma^*_{B,k} \subseteq \Gamma_{B,k+1}$.

For the implication from the right to the left we prove by induction
on $k\in \omega$ that $(b,b') \in \Gamma_{B,k}$ implies $b \rew_{\Gamma/B}
b'$.
For $k=0$ the conclusion follows by the reflexivity of $\rew_\Gamma$.
Now let $k > 0$.
\begin{itemize}

\item When $(b,b') \in \Gamma^*_{B,k}$ the conclusion follows from the
  induction hypothesis by the transitivity of $\rew_{\Gamma/B}$.

\item When $(b,b') \in \Gamma(\Gamma^*_{B,k})$ it means that $b
  \lra_{\Gamma/B} b'$ hence $b \rew_{\Gamma/B} b'$.
  
\end{itemize}
\end{proof}

The traditional implementations of term rewriting require restrictions
on the occurrences of the variables in the rewrite rules: that all
variables in the right hand side term  and in the conditions occur also
in the left hand side term.
These also have a computational significance, beyond just
implementation issues, which relates to termination.
But for the purpose of developing the results in this paper such
conditions are not required.
Moreover several implementations of rewriting go beyond such
restrictions; these include languages  / systems such as ASF+SDF,
ELAN, Maude, which support \emph{matching conditions}. 

\subsection{Model theoretic properties of algebraic rewriting} 

\begin{proposition}\label{trans-rel-poa}
$(B,\rew_{\Gamma/B})$ is a preordered $(D,S,F)$-algebra. 
\end{proposition}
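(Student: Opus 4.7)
The statement requires showing two things: that $\rew_{\Gamma/B}$ restricted to each system-sort carrier is a preorder, and that every operation symbol $\sigma \in F_{w \aar s}$ with $s \in S$ is interpreted monotonically in $B$. Reflexivity and transitivity are immediate from the definition of $\rew_{\Gamma/B}$ as the reflexive--transitive closure of $\lra_{\Gamma/B}$, so the substantive content is monotonicity.

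My plan centres on a context-closure lemma: whenever $(b,b') \in \Gamma(\Gamma_{B,k}^*)$ and $c[z]$ is a context of system sort with $z$-sort matching that of $b$, one has $(B_c(b), B_c(b')) \in \Gamma(\Gamma_{B,k}^*)$ as well. To prove this, unpack the hypothesis as $b = B_{c_0}(\theta t)$ and $b' = B_{c_0}(\theta t')$ for some witnessing context $c_0[z]$, rule $\hornsen{X}{H}{t \trans t'} \in \Gamma$, and substitution $\theta \co X \to B$ satisfying $\theta \heq \subseteq \Delta_B$ and $\theta \htr \subseteq \Gamma_{B,k}^*$. Grafting $c_0$ into the hole of $c$ produces a new context $c[c_0[z]]$, and a routine structural induction on $c$ shows that $B_{c[c_0[z]]} = B_c \circ B_{c_0}$. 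The same rule and substitution, paired with this composed context, exhibit $(B_c(b), B_c(b'))$ in $\Gamma(\Gamma_{B,k}^*)$. Taking unions over $k$ and the reflexive--transitive closure then upgrades the lemma to: $b \rew_{\Gamma/B} b'$ implies $B_c(b) \rew_{\Gamma/B} B_c(b')$ for every admissible context $c$.

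Monotonicity of $B_\sigma$ now splits according to the sort of each argument. Since every pair in $\Gamma(\Omega)$ is of system sort by construction, $\rew_{\Gamma/B}$ restricted to any data-sort carrier is just the diagonal, so monotonicity in data-sort positions is vacuous. For a system-sort argument position $i$, applying the context-closure lemma with $c[z] = \sigma(t_1, \ldots, z, \ldots, t_n)$ (with $z$ in the $i$-th slot) transfers a rewrite in position $i$ to a rewrite of the whole $B_\sigma$-term. Joint monotonicity in all arguments then follows by changing one position at a time and chaining the results through transitivity of $\rew_{\Gamma/B}$.

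The principal technical obstacle is the context-composition step: checking that $c[c_0[z]]$ really is a well-formed context with exactly one occurrence of $z$, carrying the right sorts throughout, and that its interpretation factors as $B_c \circ B_{c_0}$. This is a careful but routine induction on the structure of $c$. A secondary subtlety is that the one-argument-at-a-time strategy needs terms $t_j$ evaluating to the fixed arguments $a_j \in B_{s_j}$; this is unproblematic when $B$ is reachable, and in general can be handled by the usual diagram trick of enriching the signature with constants naming the elements of $B$.
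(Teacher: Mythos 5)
Your proof is correct and follows the same basic strategy as the paper's: reduce monotonicity of $B_\sigma$ to closure of the rewrite relation under contexts of the form $\sigma(\dots,z,\dots)$, then change one argument position at a time and chain via transitivity. The organisation differs slightly. The paper proves, by induction on the stratification index $k$, that each $\Gamma_{B,k}$ is closed under the single one-level context $\sigma(z,t_2)$, handling the $\Gamma_{B,k}^*$ case through the induction hypothesis; you instead prove a general context-composition lemma ($B_{c[c_0[z]]} = B_c \circ B_{c_0}$) for the one-step relation $\lra_{\Gamma/B}$ and observe that the reflexive--transitive closure of a context-closed relation is context-closed. This avoids the induction on $k$ entirely and is, if anything, cleaner. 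The subtlety you flag at the end is genuine and is in fact elided in the paper, whose proof introduces the context $\sigma(c[z],t_2)$ without saying where a term $t_2$ with $B_{t_2}=b_2$ comes from. Be careful, though, with your proposed diagram trick: adjoining constants for all elements of $B$ enlarges the supply of contexts and hence potentially the relation $\rew_{\Gamma/B}$ itself, so the honest repair is either to assume $B$ reachable (true in the intended applications such as $0_E$) or to allow arbitrary elements of $B$ in the non-hole positions of contexts from the outset.
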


\begin{proof}
Consider an operation $\sigma$ of the respective signature $(D,S,F)$
such that $\sigma \in F_{s_1 \dots s_n \aar s}$ where $s\in S$.
For each $k\in [n]$ let $b_k, b'_k \in B_{s_k}$ such that
\begin{itemize}

\item[--] $b_k = b'_k$ if $s_k \in D$, and

\item[--] $b_k \rew_{\Gamma/B} b'_k$ if $s_k \in S$. 
  
\end{itemize}
We have to prove that
\[
B_\sigma (b_1, \dots, b_n) \rew_{\Gamma/B} B_\sigma (b'_1, \dots, b'_n).
\]

In order to simplify the presentation of the argument we consider the
case $n=2$ and we assume that $s_1, s_2 \in S$.
For $n > 2$ and when some of the $s_k$ are data sorts the argument is
similar to the reduced case. 

We prove by induction on $k\in \omega$ that
\begin{quote}
  $(b_1, b'_1) \in \Gamma_{B,k}$ implies $(B_\sigma (b_1, b_2),
  B_\sigma (b'_1, b_2)) \in \Gamma_{B,k}$.
\end{quote}
For $k=0$ this is obvious.
For the induction step we assume that this holds for $k$ and prove it
for $k+1$.
So let us consider $(b_1, b'_1) \in \Gamma_{B,k+1}$.
We distinguish two cases:
\begin{itemize}

\item When $(b_1, b'_1) \in \Gamma^*_{B,k}$.
  Then there exists $b_1 = b^1_1, \dots, b^1_n = b'_1$ such that
  $(b^i_1, b^{i+1}_1) \in \Gamma_{B,k}$.
  By the induction hypothesis it follows that $(B_\sigma (b^i_1, b_2), 
  B_\sigma (b^{i+1}_1, b_2)) \in \Gamma_{B,k}$ hence
  $(B_\sigma (b_1, b_2), B_\sigma (b'_1, b_2)) \in \Gamma^*_{B,k} \subseteq
  \Gamma_{B,k+1}$. 

\item When there exists $\hornsen{X}{H}{(t \trans t')} \in \Gamma$ and
  $\theta\co X \to B$ and context $c[z]$ such
  that $\theta \heq \subseteq \Gamma_{B,0}$ and
  $\theta \htr \subseteq \Gamma^*_{B,k}$,
  $b_1 = B_c (\theta t)$ and $b'_1 = B_c (\theta t')$.
  By considering the context $c'[z] = \sigma(c[z],t_2)$  we obtain
  that $(B_\sigma (b_1, b_2), B_\sigma (b'_1, b_2)) \in
  \Gamma_{B,k+1}$.  

\end{itemize}

Since $b_1 \rew_\Gamma b'_1$ there exists $k\in \omega$ such that
$(b_1, b'_1) \in \Gamma_{B,k}$.
By the above conclusion it follows that
$(B_\sigma (b_1, b_2), B_\sigma (b'_1, b_2)) \in \Gamma_{B,k}$, hence
$B_\sigma(b_1, b_2) \rew_{\Gamma/B} B_\sigma (b'_1, b_2)$.
Similarly we can show that $B_\sigma (b'_1, b_2) \rew_{\Gamma/B}
B_\sigma (b'_1, b'_2)$.
By the transitivity of $\rew_{\Gamma/B}$ it follows that
$B_\sigma (b_1, b_2) \rew_{\Gamma/B} B_\sigma (b'_1, b'_2)$.
\end{proof}

\begin{proposition}\label{poa-rew-sat}
$(B,\rew_{\Gamma/B}) \models \Gamma$. 
\end{proposition}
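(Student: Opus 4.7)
The plan is to unfold the satisfaction condition for a Horn sentence in $(B,\rew_{\Gamma/B})$ and reduce it to a single application of the defining clause for $\Gamma(\Omega)$, using the trivial context. By Proposition \ref{trans-rel-poa} we already know that $(B,\rew_{\Gamma/B})$ is a preordered $(D,S,F)$-algebra, so satisfaction of each Horn transition $\hornsen{X}{H}{(t \trans t')} \in \Gamma$ is well-defined.

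First I would fix such a transition $\hornsen{X}{H}{(t \trans t')}$ in $\Gamma$ together with an arbitrary assignment $\theta \co X \to B$ satisfying the premise; i.e. $\theta \heq \subseteq \Delta_B$ and $\theta \htr \subseteq \rew_{\Gamma/B}$. The goal becomes $(\theta t, \theta t') \in \rew_{\Gamma/B}$. By Definition \ref{poa-sent}, in an atomic transition the terms $t,t'$ share a \emph{system} sort $s$, so the trivial context $c[z] := z$ with $z$ of sort $s$ is an applicable rewriting context of system sort, and $B_c(\theta t) = \theta t$, $B_c(\theta t') = \theta t'$.

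Second, I would exploit finiteness. Since $H$ is a finite conjunction, $\htr$ is a finite set, and by the previous proposition $\rew_{\Gamma/B} = \bigcup_{k\in\omega} \Gamma_{B,k}$. The sequence $(\Gamma_{B,k})_k$ is increasing because $\Gamma_{B,k+1} \supseteq \Gamma_{B,k}^* \supseteq \Gamma_{B,k}$. Hence there exists a single $k\in \omega$ with $\theta \htr \subseteq \Gamma_{B,k} \subseteq \Gamma^*_{B,k}$. Combined with $\theta \heq \subseteq \Delta_B$, we can now invoke the defining clause of $\Gamma(\Omega)$ with $\Omega = \Gamma^*_{B,k}$ and the trivial context $c[z] = z$, yielding
\[
(\theta t, \theta t') \;=\; (B_c(\theta t), B_c(\theta t')) \;\in\; \Gamma(\Gamma^*_{B,k}) \;\subseteq\; \Gamma_{B,k+1} \;\subseteq\; \rew_{\Gamma/B}.
\]

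Since $\theta$ was arbitrary this establishes $(B,\rew_{\Gamma/B}) \models \hornsen{X}{H}{(t \trans t')}$, and since the transition was arbitrary in $\Gamma$, we obtain $(B,\rew_{\Gamma/B}) \models \Gamma$. I do not anticipate a real obstacle: the only subtle point is the passage from the pointwise statement $\theta \htr \subseteq \bigcup_k \Gamma_{B,k}$ to a uniform bound $k$, which is where finiteness of $\htr$ together with the monotonicity of the approximating sequence does the work.
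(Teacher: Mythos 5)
Your proposal is correct and follows essentially the same route as the paper's proof: fix a transition and a satisfying assignment, use finiteness of $\htr$ to get a uniform bound $k$ with $\theta\htr \subseteq \Gamma^*_{B,k}$, and apply the defining clause of $\Gamma(-)$ with the identity context to land $(\theta t,\theta t')$ in $\Gamma_{B,k+1} \subseteq\ \rew_{\Gamma/B}$. The only cosmetic difference is that the paper phrases the satisfaction of $H$ via expansions $B'$ of $B$ rather than directly via assignments, and you are slightly more explicit about the uniform-bound step and the system-sort check for the trivial context.
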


\begin{proof}
  Let $\hornsen{X}{H}{(t \trans t')}$ be any sentence in $\Gamma$.
  Let $B'$ be any $(D,S,F \cup X)$-expansion of $B$ such that
  $(B',\rew_{\Gamma/B}) \models H$. 
  Let $\theta \co X \to B$ such that $\theta(x) = B'_x$ for each $x\in
  X$. Then 
  \begin{itemize}

  \item $\theta \heq \subseteq \Gamma_{B,0}$, and

  \item $\theta\htr  \ \subseteq \ \rew_{\Gamma/B}$ which
    means that there exists $k\in \omega$ such that
    $\theta\htr \subseteq \Gamma_{B,k}$.
    
  \end{itemize}
  By considering the identity context $c[z]=z$ we obtain that
  $(\theta t,\theta t') \in \Gamma_{B,k+1}$.
  Hence $\theta t \rew_{\Gamma/B} \theta t'$ which means $B'_t
  \rew_{\Gamma/B} B'_{t'}$ which means $(B', \rew_{\Gamma/B}) \models
  \lc{(t \trans t')}$. 
\end{proof}

\subsection{The adjunction property of algebraic rewriting}

\begin{notation}
For any \MSA\ signature $(S,F)$ let $\Mod(S,F)$ denote the category of
the \MSA\ $(S,F)$-algebras and their homomorphisms.
For any \POA\ signature $(D,S,F)$ we let $\Mod(D,S,F)$ denote the
category of the \POA\ $(D,S,F)$-models and their homomorphisms, and 
$\Mod(D,S,F,\Gamma)$ denote the full category of $\Mod(D,S,F)$ defined
by those preordered algebras that satisfy $\Gamma$.  
\end{notation}

The following theorem shows that any set $\Gamma$ of transitions
endows \emph{any} \MSA\ algebra with a \emph{minimal} preorder
structure that yields a \POA\ algebra that \emph{satisfies} $\Gamma$.
Moreover the original \MSA\ algebra is preserved.
Technically it is convenient to present this result as a categorical
adjunction.
The uniqueness property from this result indicates that this can be
taken as an \emph{implicit} definition of non-deterministic algebraic
rewriting. 

\begin{theorem}\label{rew-adjoint}
The forgetful functor $\mathcal{U} \co \Mod(D,S,F,\Gamma) \to
\Mod(D\cup S, F)$ has a left-adjoint left inverse which maps any $(D
\cup S,F)$-algebra $B$ to the preordered algebra
$(B,\rew_{\Gamma/B})$.  
\end{theorem}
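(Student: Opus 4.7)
The plan is to exhibit $L \co B \mapsto (B, \rew_{\Gamma/B})$ as the left adjoint explicitly. Propositions \ref{trans-rel-poa} and \ref{poa-rew-sat} already ensure $L(B) \in \Mod(D,S,F,\Gamma)$, and since $\mathcal{U}$ only forgets the preorder, the identity $\mathcal{U}L = \mathrm{Id}$ is built in, so the adjunction unit $\eta_B \co B \to \mathcal{U}L(B) = B$ is simply the identity. The universal property to verify then reduces to a single claim: for every \MSA\ homomorphism $h \co B \to \mathcal{U}(A,\leqslant)$ with $(A,\leqslant) \in \Mod(D,S,F,\Gamma)$, the same underlying function $h$ is already a \POA\ homomorphism $(B,\rew_{\Gamma/B}) \to (A,\leqslant)$; uniqueness of this lift is forced by the unit equation, and functoriality of $L$ on morphisms is the special case $(A,\leqslant) = L(B')$.

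The whole problem therefore collapses to monotonicity: if $b \rew_{\Gamma/B} b'$ then $h b \leqslant h b'$. Using the proposition just preceding the theorem, it suffices to prove by induction on $k$ that $(b,b') \in \Gamma_{B,k}$ implies $h b \leqslant h b'$. The base case $k = 0$ follows from reflexivity of $\leqslant$. For the inductive step, decompose $\Gamma_{B,k+1} = \Gamma_{B,k}^* \cup \Gamma(\Gamma_{B,k}^*)$: the first component is handled by applying the inductive hypothesis along each link of a chain and closing under transitivity of $\leqslant$.

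The main work, and the expected obstacle, is the component $\Gamma(\Gamma_{B,k}^*)$, where $b = B_c(\theta t)$ and $b' = B_c(\theta t')$ for some transition $\hornsen{X}{H}{(t \trans t')} \in \Gamma$, context $c[z]$ of a system sort, and $\theta \co X \to B$ with $\theta \heq \subseteq \Delta_B$ and $\theta \htr \subseteq \Gamma_{B,k}^*$. Set $\theta' = h \circ \theta \co X \to A$ and let $A'$ be the $(D \cup S, F \cup X)$-expansion of $A$ that assigns $\theta' x$ to each $x \in X$; similarly expand $B$ to $B'$ via $\theta$. Since $h$ remains an \MSA\ homomorphism $B' \to A'$, Lemma \ref{term-pres-lem} gives $h(\theta u) = A'_u$ for every $(D \cup S, F \cup X)$-term $u$. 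Consequently $\theta \heq \subseteq \Delta_B$ yields $A' \models \heq$, and the inductive hypothesis combined with transitivity of $\leqslant$ yields $A' \models \htr$. Since $(A,\leqslant)$ satisfies the whole transition, $A'_t \leqslant A'_{t'}$. A further application of Lemma \ref{term-pres-lem} to the context $c$, viewed as a term in the signature extended with the variable $z$, delivers $h(B_c(b_0)) = A_c(h b_0)$ for every $b_0$, so $h b = A_c(A'_t)$ and $h b' = A_c(A'_{t'})$; monotonicity of the \POA\ operations on system sorts propagates $A'_t \leqslant A'_{t'}$ through $A_c$, and we conclude $h b \leqslant h b'$. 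The real subtlety lies in the careful bookkeeping of the two expansions (first by $X$, then by $z$) so that Lemma \ref{term-pres-lem} applies cleanly at each place; once this is in hand, everything else is immediate.
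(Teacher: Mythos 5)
Your proposal is correct and follows essentially the same route as the paper's proof: reduce the adjunction to the claim that every \MSA\ homomorphism $h \co B \to A$ into a model of $\Gamma$ is automatically monotone for $\rew_{\Gamma/B}$, prove this by induction on $k$ with the two-case split of $\Gamma_{B,k+1}$, and in the context case pass to the $X$-expansions of $B$ and $A$, verify $(A',\leqslant)\models H$, and push $A'_t \leqslant A'_{t'}$ through the monotone function $A_c$. The only cosmetic difference is that you invoke Lemma \ref{term-pres-lem} explicitly where the paper uses it tacitly.
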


\begin{proof}
It is enough if we proved that:
\begin{quote}
For any  preordered $(D,S,F)$-algebra $(A,\leqslant)$ such that
$(A,\leqslant)\models \Gamma$ and for any $h \co B \to A$  a $(D \cup
S,F)$-homomorphism, $h$ is a homomorphism of \POA\ algebras
$(B,\rew_{\Gamma/B}) \to (A,\leqslant)$ too.  
\end{quote}

\[
  \xymatrix @C-2em{
  B \rrto^= \drto_h & & \mathcal{U}(B,\rew_{\Gamma/B}) \dlto^h & &
  (B,\rew_{\Gamma/B}) \dlto^h \\
   & \mathcal{U}(A,\leqslant) & & (A,\leqslant) & 
  }
\]

  Thus we only have to prove that $b \rew_{\Gamma/B} b'$ implies
  $h(b) \leqslant h(b')$.
  By induction on $k \in \omega$ we prove that $(b,b') \in
  \Gamma_{B,k}$ implies $h(b) \leqslant h(b')$.

  For $k=0$ the conclusion follows by the reflexivity of $\leqslant$.

  For the induction step we assume $(b,b') \in (\Gamma_{B,k+1})^*$.
  We have two cases:
  \begin{enumerate}

  \item $(b,b') \in (\Gamma_{B,k})^*$, or

  \item $(b,b') \in \Gamma((\Gamma_{B,k})^*)$, i.e. $b = B_c
    (\theta t)$ and $b' = B_c (\theta t')$ where $c[z]$ 
    is a context, $\hornsen{X}{H}{(t \trans t')}$ belongs to $\Gamma$,
    $\theta \co X \to B$, $\theta \heq \subseteq
    \Gamma_{B,0}$ and $\theta\htr \subseteq \Gamma_{B,k}^*$. 

  \end{enumerate}
  In the former case, by the induction hypothesis $h(\Gamma_{B,k}) \
  \subseteq \ \leqslant$.
  It follows that $h(\Gamma_{B,k}^*) =
  (h(\Gamma_{B,k}))^* \ \subseteq \ \leqslant^* \ = \ \leqslant$ (the
  last equality follows by the transitivity of $\leqslant$).

  In the latter case we have $h(b) = h(B_c (\theta t)) = A_c
  (h(\theta t))$ and likewise $h(b') = A_c (h(\theta t'))$.
  Thus, by the monotonicity of the interpretations of the operations
  in $A$ it is enough to prove that $h(\theta t) \leqslant
  h(\theta t')$.

  Let us consider $A'$ the $(D,S,F \cup X)$-expansion of $A$ such that
  $A'_x = A_{h(\theta x)}$ for each $x\in X$.
  We establish that $(A', \leqslant) \models H$.
  Let $B'$ be the  $(D,S,F \cup X)$-expansion of $B$ such that $B'_x =
  \theta x$ for each $x\in X$.
  Note that $h$ is also a $(D,S,F\cup X)$-homomorphism $B' \to
  A'$.
  \begin{itemize}

  \item Let $(t_1 = t_2) \in \heq$.
    Then $(A'_{t_1}, A'_{t_2}) = (h(B'_{t_1}), h(B'_{t_2})) =
    (h(\theta t_1), h(\theta t_2)) \in h(\theta \heq)
    \subseteq h(\Gamma_{B,0})$.
    Hence $A'_{t_1} = A'_{t_2}$ which means $A' \models (t_1 = t_2)$.

  \item Let $\lc{(t_1 \trans t_2)} \in \htr$.
    Then, similarly to above we get that $(A'_{t_1}, A'_{t_2}) \in
    h(\theta\htr) \subseteq h(\Gamma_{B,k}^*)$.
    From the proof at the former case of the induction step we know
    that $h(\Gamma_{B,k}^*) \ \subseteq \ \leqslant$.
    Hence $A'_{t_1} \leqslant A'_{t_2}$ which shows that
    $A' \models (t_1 \lc{\trans} t_2)$.  

  \end{itemize}
  Finally, since $(A,\leqslant) \models \hornsen{X}{H}{(t \trans t')}$
  and   $(A',\leqslant) \models H$ it follows that $(A',\leqslant)
  \models (t \lc{\trans} t')$ which means $A'_t \leqslant A'_{t'}$
  which means $h(\theta t) \leqslant h(\theta t')$.  
\end{proof}

The unit given by the adjunction given by this theorem is the
identity.
In the literature, eg. \cite{ins,iimt} such adjunctions are called
\emph{strongly persistent} adjunctions.
They are stronger than the ordinary adjunctions but weaker than the 
equivalences of categories.

\begin{example}\label{bubblesort-ex2}
  We continue Example \ref{bubblesort-ex}.
  Recall that from the three \POA\ $(D,S,F)$-models presented there
  only the first two satisfy the transition of the specification.
  From these two the second one is the free $(D,S,F,\Gamma)$-model
  over the underlying $(D \cup S,F)$-algebra of the lists of natural
  numbers. 
\end{example}

\begin{example}\label{sol-ex2}
  We continue Example \ref{sol-ex}.
  There $D = \emptyset$.
  From the three $\POA$ models presented in Example \ref{sol-ex}, all
  of them sharing the same underlying \MSA\ $(S,F)$-algebra, only $A$
  and $B$ satisfy the transition of \texttt{SOLUTIONS}.
  From these two $A$ is the free model over the underlying \MSA\
  algebra.

  This example displays a kind of non-determinism that is not
  available in the term based approaches to rewriting, neither in
  \RWL\ nor on \POA. 
  For instance we have that $4 \leqslant_A 2$ and $4 \leqslant_A -2$
  because the equation $x^2 = 4$ has $2$ and $-2$ as solutions.
  In the approaches that deal only with terms, a single transition
  can be applied only in one way to a term because term rewriting
  matching is deterministic.
  In our example this matching corresponds to solving quadratic
  equations $x^2 = a$, which may have two solutions.
  Implementation of algebraic rewriting over predefined types may
  replace the matching algorithms by constraint solvers.  
\end{example}  

\begin{notation}
It is well known that any equational theory $E$ admits an initial
algebra.
Let us denote it by $0_E$.
Given a set $\Gamma$ of transitions we abbreviate by $\rew_{\Gamma/E}$
the rewriting relation $\rew_{\Gamma, 0_E}$ on $0_E$ determined by
$\Gamma$.
\end{notation}
We have the following practically important consequence of Theorem
\ref{rew-adjoint}.  

\begin{corollary}\label{poa-init}
$(0_E, \rew_{\Gamma/E})$ is the initial preordered algebra that
satisfies $E \cup \Gamma$. 
\end{corollary}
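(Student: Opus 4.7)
The plan is to combine MSA initiality of $0_E$ with the strongly persistent adjunction of Theorem \ref{rew-adjoint}. The strategy is to lift the initial arrow from MSA to POA without any additional choices, so that uniqueness on the MSA side automatically forces uniqueness on the POA side.

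First I would verify that $(0_E, \rew_{\Gamma/E})$ actually belongs to the category in question. Proposition \ref{trans-rel-poa} gives that it is a preordered $(D,S,F)$-algebra, and Proposition \ref{poa-rew-sat} gives $(0_E, \rew_{\Gamma/E}) \models \Gamma$. Since $E$ consists of equations (MSA-atoms), POA satisfaction of $E$ coincides with MSA satisfaction by the underlying algebra; as $0_E \models E$ by definition of the initial model of the equational theory, we get $(0_E, \rew_{\Gamma/E}) \models E \cup \Gamma$.

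Next I would handle existence and uniqueness of the homomorphism into an arbitrary $(A, \leqslant)$ in the category. Given $(A, \leqslant) \models E \cup \Gamma$, its underlying MSA algebra $A$ satisfies $E$, so by the initiality of $0_E$ in the class of MSA models of $E$ there is a unique $(D\cup S,F)$-homomorphism $h \co 0_E \to A$. Applying Theorem \ref{rew-adjoint} with $B := 0_E$, since $(A,\leqslant) \in \Mod(D,S,F,\Gamma)$ and $h$ is an MSA homomorphism $0_E \to \mathcal{U}(A,\leqslant)$, the same $h$ is automatically a POA homomorphism $(0_E, \rew_{\Gamma/E}) \to (A,\leqslant)$. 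Conversely, any POA homomorphism $(0_E, \rew_{\Gamma/E}) \to (A,\leqslant)$ forgets to an MSA homomorphism $0_E \to A$, which by initiality of $0_E$ must coincide with $h$; hence uniqueness transfers.

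There is no real obstacle here: the proof is essentially a transfer along the strongly persistent adjunction. The only subtlety worth flagging in the write-up is making explicit that POA satisfaction of an equation depends only on the underlying MSA algebra, so that the two layers $E$ and $\Gamma$ are handled independently — $E$ by MSA initiality, $\Gamma$ by the adjunction of Theorem \ref{rew-adjoint}. This also gives the corollary a pleasing conceptual form: it is the composite of the MSA free construction with the POA free construction, which is left adjoint (and strongly persistent) precisely because both factors are.
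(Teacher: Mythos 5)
Your proof is correct and follows essentially the same route as the paper: use MSA initiality of $0_E$ to get the unique underlying homomorphism and then invoke Theorem \ref{rew-adjoint} to promote it to a \POA\ homomorphism into any model of $E \cup \Gamma$. Your explicit treatment of uniqueness (forgetting a \POA\ homomorphism back to \MSA) and of the fact that satisfaction of $E$ depends only on the underlying algebra merely spells out steps the paper leaves implicit.
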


\begin{proof}
$0_E \models E$ and moreover $(0_E, \rew_{\Gamma/E}) \models \Gamma$
by Proposition \ref{poa-rew-sat}.
Let $(A,\leqslant)$ be any preordered $(D,S,F)$-algebra such that
$(A,\leqslant) \models E \cup \Gamma$.
Then $A \models E$.
By the initiality property of $0_E$ there exists an unique $(D \cup
S,F)$-homomorphism $h \co 0_E \to A$.
By Theorem \ref{rew-adjoint} we have that $h$ is also a
$(D,S,F)$-homomorphism $(0_E, \rew_{\Gamma/E}) \to (A,\leqslant)$. 
\end{proof}

Modulo \RWL\ restrictions of the occurences of variables,
the result of Corollary \ref{poa-init} corresponds to a related
initiality result in \cite{meseguer92}.
However due to the collapse of \RWL\ to \POA\ being illusory, as
discussed in Section \ref{poa-institution}, the derivation of
Corollary \ref{poa-init} from the initiality result of
\cite{meseguer92} appears as problematic.
Moreover, the source of this corollary, namely Theorem
\ref{rew-adjoint}, does not have any correspondent in the \RWL\
literature. 
An important class of examples of this refer to rewriting on
predefined types, something which is unavoidable in any proper
programming  language, Maude and CafeOBJ included.
For instance both Examples \ref{bubblesort-ex2} and \ref{sol-ex2} fall
outside the scope of the initial semantics result of
Corollary \ref{poa-init}.

\begin{example}\label{bubble-sort-user-defined}
The \RWL\ literature abounds of examples that fall within
the scope of the initial semantics result of \cite{meseguer92}, and
many of those can be made examples for our Corollary \ref{poa-init}.
However let us provide here a concrete example based on
\texttt{BUBBLE-SORT}.
For this we have to replace the predefined \texttt{NAT} and
\texttt{BOOL} by user defined \texttt{NAT} and \texttt{BOOL} with
initial equational semantics. 
We can get minimalist about that and consider the following
specification.
\begin{maude}
mod NAT is 
  sorts Bool Nat .
  ops true false : -> Bool .
  op 0 : -> Nat .
  op s_ : Nat -> Nat .
  vars n m : Nat .
  eq (0 < s n) = true .
  eq (n < 0) = false .
  eq (s m < s n) = (m < n) .
endm
\end{maude}
Then $E$ of Corollary \ref{poa-init} consists of the three equations
of \texttt{NAT} above plus the associativity equation of lists
concatenation.
$0_E$ is precisely the \MSA\ algebra of the lists of naturals.
Consequently $\rew_{\Gamma/E}$ corresponds to the preorder of the
second model presented in Example \ref{bubblesort-ex}. 
\end{example}

\section{Computing in preordered algebras}
\label{comput-sec}

Deterministic term rewriting comes with some clear and practically
relevant sufficient conditions (i.e. confluence and termination) for
the completeness of computations.
Under these conditions the completeness is explained in various
different ways that suit different backgrounds.
The highest and the mathematically most advanced such explanation is
the isomorphism between, on the one hand the initial algebra of the
corresponding equational theory, and on the other hand the algebra of
the normal forms of the rewriting relation on terms.
This applies equally to plain term rewriting and to term rewriting
modulo a theory.
The former model can be considered as the \emph{denotational model}
and the latter as the \emph{computational model}.\footnote{Note that
  here the term ``model'' is used in the precise mathematical sense
  given by model theory rather than in the informal sense given by the
  concept of ``modelling''.}

This approach brings a novelty to the theory of non-deterministic
rewriting.
Most of the \RWL\ literature around the language Maude
(e.g. \cite{maude-book}) defines a computational method for
non-deterministic rewriting on initial algebras modulo equational
theories, defines its completeness in proof theoretic terms, and
provides sufficient conditions for this completeness. 
Moreover all these are presented in a colloquial style
\cite{maude-book} lacking mathematical rigor. 
However in \cite{meseguer2020} the so-called ``coherence problem'' is
defined in terms that are similar to our model homomorphisms approach,
but modulo the crucial and rather ample difference between the model
theoretic structures involved.   
Our aim in this section is to fill this gap for this computational
method, first by providing a computational model in a model theoretic
sense within the framework of \POA, and then by specifying a set of
sufficient conditions that guarantee its existence and its soundness
and completeness.
Because the computational method refers to non-deterministic rewriting
modulo equational theories our completeness result is built on top of
the above mentioned model theoretic completeness of deterministic
rewriting. 

More specifically, in this section we do the following.
\begin{enumerate}

\item We present the method to compute with \POA\ transitions modulo
  equations that is employed by languages such as Maude and CafeOBJ.
  This constitutes the main application for the theory developed in
  this section. 

\item We define an abstract \POA\ model that serves as a generic
  ``computational'' model. 

\item Under some conditions that match very well the practice of
  programming in Maude and CafeOBJ we prove the isomorphism between
  the computational model and the denotational model given by the
  adjointness result of Theorem \ref{rew-adjoint}.  
  This isomorphism result explains the soundness and the completeness
  of the respective computational method.   
  
\end{enumerate}

\subsection{The computational method}
In this section we present the method employed by Maude and CafeOBJ
for hybrid computing with equations and transitions.
However our presentation is tailored to the framework of this paper.

For a program consisting of a set $E$ of equations and a set $\Gamma$
of transitions, in principle there are two different levels of
computing modulo axioms.  
\begin{enumerate}

\item An ``upper'' level where the rewriting relation is determined by 
  $\Gamma$ on the initial algebra $0_E$.
  This is rewriting by $\Gamma$ modulo $E$. 

\item A ``lower'' level where $E$ is partitioned as $E = E_0 \cup
  E_1$, with $E_1$ being used as rewrite rules modulo $E_0$.
  Thus $0_E$ is obtained as the algebra of the normal forms of
  rewriting modulo $E_0$ by $E_1$, or otherwise said the normal forms
  of rewriting by $E_1$ in $0_{E_0}$.
  At this level, with respect to $E_0$ Maude and CafeOBJ implement
  only associativity, commutativity, identity of user defined
  \emph{binary operations}.  
  
\end{enumerate}

Thus with a program consisting of a set $E$ of equations and a
set $\Gamma$ of transitions, computations involve two rewriting
systems, one for $E_1$ (modulo $E_0$), and one for $\Gamma$ (also
modulo $E_0$). 
Here there is the basic assumption that
\begin{quote}
  \hspace{-2em}
  \emph{the rewrite system modulo $E_0$ determined by $E_1$ is
    confluent and terminating,} 
\end{quote}
so each term $t$ modulo $E_0$,\footnote{The congruence class of $t$
  determined by $E_0$ on the term algebra.} i.e., $t_{E_0} \in 0_{E_0}$,
has an unique normal form $\sem{t_{E_0}}$ with respect to
$\rew_{E_1 / E_0}$.

\begin{example}\label{bubble-e01}
Let us consider \texttt{BUBBLE-SORT} with user defined data types
(like in Example \ref{bubble-sort-user-defined}).
Then $E_0, E_1, \Gamma$ are as follows:
\begin{itemize}

\item $E_0$ consists of the associativity of list concatenation.

\item $E_1$ consists of the three equations of \texttt{NAT} as
  specified in Example \ref{bubble-sort-user-defined}. 

\item $\Gamma$ consists of the single transition. 
  
\end{itemize}
Then $E_1$ is confluent and terminating on the sorts \texttt{Nat} and
\texttt{Bool}, the normal forms of sort \texttt{Nat} being the natural
numbers in the form \texttt{(s s...s 0)}, and of sort \texttt{Bool}
being just the two constants.
This extends immediately to the confluence and termination modulo
$E_0$, that takes place on the sort \texttt{List\{Nat\}}, because in
the terms the list concatenation operation \texttt{\_\_} never occurs
below an operation from \texttt{NAT}. 
\end{example}

Then a computation process that starts with a term $t$ is governed by
the following \emph{non-deterministic} algorithm:

\

\begin{tcolorbox}
\begin{itemize}
 
\item[0.] The current term $T$ is set to $t$.

\item[1.] $T$ is reduced to its normal form $\sem{T}$ under
  $\rew_{E_1 / E_0}$.  

\item[2.] The new value of $T$ is set to a term obtained from
  $\sem{T}$ by applying one rewrite step by $\Gamma$.
  Then  we move to step 1. 
 
\end{itemize}
\end{tcolorbox}

\

\noindent
This can be considered as a computational modelling of the initial
preordered algebra $(0_E, \rew_{\Gamma/E})$ (Corollary \ref{poa-init}).
In what follows we provide an argument for this. 

According to the theory of equational rewriting, in the presence of
the assumption of termination and confluence, the initial algebra 
$0_E$ and the algebra $N_{E_1 / E_0}$ of the normal forms of the
rewriting relation $\rew_{E_1 / E_0}$ on $0_{E_0}$ are isomorphic. 
The mathematical modelling of the algorithm above that defines
computation in \POA\ is given by the reflexive-transitive
$\rew_{\Gamma/\sem{\_}}$ closure of the following relation on
$N_{E_1 / E_0}$: 
\[
  \sem{t_{E_0}} \lra_{\Gamma/\sem{\_}} \sem{t'_{E_0}}
  \mbox{ \ iff there exists \ }
  u \mbox{ \ such that \ }
  \sem{t_{E_0}} \lra_{\Gamma/E_0} u_{E_0}
  \mbox{ \ and \ }
  \sem{u_{E_0}} = \sem{t'_{E_0}}. 
\]

Thus our goal is
\begin{itemize}

\item first, to establish that $(N_{E_1 / E_0},
  \rew_{\Gamma/\sem{\_}})$ is a preordered algebra, and

\item second, to establish that the isomorphism $h \co 0_E \to
N_{E_1 / E_0}$ gives an isomorphism of preordered algebras
\begin{equation}\label{poa-comp}
(0_E, \rew_{\Gamma/E}) \ \cong \ (N_{E_1 / E_0},\rew_{\Gamma/\sem{\_}}).
\end{equation}

\end{itemize}

In order to establish \eqref{poa-comp} it is enough to prove two
things:
\begin{equation}\label{thing1}
  \mbox{that \ } h(\rew_{\Gamma/E}) \ \subseteq  \
  \rew_{\Gamma/\sem{\_}}
  \mbox{, and}
\end{equation}
\vspace{-1.5em}
\begin{equation}\label{thing2}
  \mbox{that \ } h^{-1} (\rew_{\Gamma/\sem{\_}}) \ \subseteq \ \rew_{\Gamma/E}. 
\end{equation}
The latter fact, which is the easier one, represents a sort of
\emph{soundness} property, because it says that through our algorithm,
from each state $t_E$ we can reach only states $t'_E$ such that
$t_E \rew_{\Gamma/E} t'_E$.
The former fact, more difficult to establish, represents a sort of
\emph{completeness} aspect: if $t_E \rew_{\Gamma/E} t'_E$ then $t'_E$
can be computed from $t_E$ by our algorithm.

\subsection{The abstract computational \POA\ model}

In order to study the properties \eqref{thing1} and \eqref{thing2}
it is mathematically convenient to abstract $0_{E_0}$, $0_E$ and
$N_{E_1 / E_0}$ to arbitrary algebras; in this way we achieve simpler
notations, we get rid of some concrete redundant details (such as
terms and congruence classes of terms) and by those we are able to
unhide the essential causalities.
Moreover this abstract framework can be also applied to other
particular situations of interest such as rewriting-based computations
with predefined types.
In fact all these are well known general benefits of axiomatic
treatment of problems.
In this case the mathematical object thus obtained will be an abstract
\POA\ model that represents the computational method. 

In what follows we will often switch between \MSA\ and \POA\ algebras
and back.
This is based on an implicit assumption of a \POA\ signature $(D,S,F)$
and of its associated \MSA\ signature $(D \cup S,F)$.

The following concept captures abstractly the essence of the normal
forms with respect to rewriting relations which has two aspects: as a
mapping, the computation of normal forms is homomorphic and
idempotent.   

\begin{definition}\label{nf-homomorphism} An \MSA\ homomorphism
$\beta \co B \to N$ is an \emph{nf-homomorphism} when
$N \subseteq B$ and $N$ is invariant with respect to $\beta$, i.e.,
$\beta n = n$ for each $n \in N$. 
\end{definition}

\begin{example}[Normal forms of common term rewriting]
We let $B=0_{(S,F)}$ be the term algebra for an \MSA\ signature
$(S,F)$ and $E$ be a set of $(S,F)$-equations which we use for
rewriting. 
If the rewriting relation $\rew_{E/0_{(S,F)}}$ is confluent and
terminating then the normal forms $\sem{t}$ of the terms $t$ form an
algebra $N_E$ (this is $N$ of the Definition \ref{nf-homomorphism})
that is actually isomorphic to the initial algebra $0_E$.
The mapping of the terms to their normal forms, i.e. $t \mapsto
\sem{t}$ is a homomorphism $0_{(S,F)} \to N_E$, so $\beta$ is
$\sem{\_}$. 
\end{example}

\begin{example}[Normal forms of term rewriting modulo axioms]
  We let $B= 0_{E_0}$ be the initial algebra of an equational theory
  $E_0$ for an \MSA\ signature $(S,F)$.
  Let $E_1$ be a set of $(S,F)$-equations used for rewriting modulo
  $E_0$.
  If the rewriting relation $\rew_{E_1 / E_0}$ (on $0_{E_0}$) is
  confluent and terminating then the normal forms $\sem{t_{E_0}}$ of
  the terms $t_{E_0}$ modulo $E_0$ form the algebra that we denoted
  $N_{E_1 / E_0}$ (this is $N$ of Definition \ref{nf-homomorphism}).
  Then the mapping $\sem{\_} \co 0_{E_0} \to N_{E_1 / E_0}$ is an
  \MSA\ homomorphism.
\end{example}

The following gives an abstract mathematical definition of the above
non-deterministic algorithm that combines equational with
transition-based rewriting.
What it essentially says is that the rewriting relation projected on
the normal forms should be obtained through a finite sequence of
steps of the algorithm defining the computation method.
The notation $\beta(\lra_{\Gamma/B})$ stands for this projection,
i.e. the relation
$\{ (\beta b_1, \beta b_2) \mid b_1 \lra_{\Gamma/B} b_2 \}$. 

\begin{definition}
Let $\Gamma$ be a set of transitions, $\beta \co B \to N$ be an
nf-homomorphism and $\lra_{\Gamma/\beta}$ be the following binary
relation on $N$:  
\[
n_1 \lra_{\Gamma/\beta} n_2 \mbox{ \ if and only if there exists \ }
b_2
\mbox{ \ such that \ } n_1 \lra_{\Gamma/B} b_2 \mbox{ \ and \ }
\beta b_2 = n_2. 
\]
We say that \emph{$\Gamma$ is $\beta$-coherent} when
\[
\beta(\lra_{\Gamma/B}) \ \subseteq \ \rew_{\Gamma/\beta}.
\]
\end{definition}

\begin{example}\label{coh-example}
When $B = 0_{E_0}$, $N = N_{E_1 /  E_0}$, $\beta = \sem{\_}$, that
$\Gamma$ is $\sem{\_}$-coherent means that whenever $t_{E_0}
\lra_{\Gamma/E_0} t'_{E_0}$ there exists $u$ such that $\sem{t_{E_0}}
\rew_{\Gamma/E_0} u_{E_0}$ and $\sem{u_{E_0}} = \sem{t'_{E_0}}$. 
\end{example}

With the next two examples we get more concrete about the
$\beta$-coherence condition.

\begin{example}\label{bubble-beta-coh}
Within the context of the Examples \ref{bubble-e01} and
\ref{coh-example} combined we have that $B = 0_{E_0}$ is the algebra
of the lists of $\Sigma_{\texttt{NAT}}$-terms, where
$\Sigma_{\texttt{NAT}}$ is the signature \texttt{NAT} of Example
\ref{bubble-sort-user-defined}.
In $B$, on the sort \texttt{Bool}. besides \texttt{true} and
\texttt{false} we have also the potential inequalities between natural
numbers.
$N = N_{E_1 /  E_0}$ is the algebra of the lists of the natural
numbers, that share with $B$ the same interpretations of the sorts
\texttt{Nat} and \texttt{List\{Nat\}}, but on \texttt{Bool} we have
only the two constants, \texttt{true} and \texttt{false}.
The homomorphism $\beta$ essentially reduces any potential inequality
between natural numbers to any of the two constants.
The $\beta$-coherence conditions holds trivially true because
$\lra_{\Gamma/B}$ (aka $\lra_{\Gamma,E_0}$) is empty as the transition
of $\Gamma$ cannot be applied because the equations responsible for
the evaluations of the Boolean condition belong to $E_1$ rather than
$E_0$.

However we can twist this example such that the $\beta$-coherence
condition is not trivial anymore.
First we move the three equations of \texttt{NAT} from $E_1$ to $E_0$.
If we did just that then the $\beta$-coherence condition would still
be trivial but in a different way than  before; now $\beta$ would be
an identity homomorphism because $E_1$ gets emptied.
However we can still escape this triviality by adding more operations
on the natural numbers, such as an addition operation which is
specified as follows:
\begin{maude}
  op _+_ : Nat Nat -> Nat .
  eq m + 0 = m .
  eq m + s n = s(m + n) .
\end{maude}
Then $E_1$ would consist of the two equations above.
Now $\beta$ essentially evaluates additions.
Let us consider the $\beta$-coherence condition as given in Example
\ref{coh-example}.
Then $t_{E_0} \lra_{\Gamma/ E_0} t'_{E_0}$ means that two adjacent
natural numbers from $t_{E_0}$, \emph{in their normal form}, are
swapped.
The rest of the elements of $t_{E_0}$ may not necessarily be free of
additions.
In this case $\sem{t_{E_0}}$ is the list of the natural numbers
obtained by evaluating all additions in $t_{E_0}$ and $u_{E_0}$ is the
same with respect to $t'_{E_0}$.
Obviously $\sem{u_{E_0}} = u_{E_0}$ and moreover
$\sem{t_{E_0}} \lra_{\Gamma/ E_0} u_{E_0}$. 
\end{example}

The next example illustrates a failure of the $\beta$-condition.
It is adapted after an example from \cite{maude-book}. 

\begin{example}\label{beta-fail}
Let us consider the following specification:
\begin{maude}
mod BETA-FAIL is 
  sorts U B M .
  ops a b c : -> U .
  ops 0 1 : -> B .
  op [__] : U B -> M .
  op __ : M M -> M [assoc comm] .
  eq [a 0] [b 1] [c 0] = [a 0] [b 1] .
  eq [a 1] [b 0] [c 1] = [a 1] [b 0] .
  rl [b 0] => [b 1] .
  rl [b 1] => [b 0] .
  rl [a 0] [b 0] => [a 1] [b 1] .
  rl [a 1] [b 1] => [a 0] [b 0] .
endm
\end{maude}
We set $E_0$ to the associativity and the commutativity of the union
of multisets, $E_1$ to the set of the two explicit equations, and
$\Gamma$ to the set of the four transitions. 
Let $t_{E_0}$ be \texttt{[a 0] [b 1] [c 0]} and $t'_{E_0}$ be
\texttt{[a 0] [b 0] [c 0]}.
Then $t_{E_0} \lra_{\Gamma/ E_0} t'_{E_0}$ through the application of
the second transition.
We also have $\sem{t_{E_0}} = \texttt{[a 0] [b 1]}$ and
$\sem{t'_{E_0}} = t'_{E_0}$.
But any $u_{E_0}$ such that $\sem{t_{E_0}} \rew_{\Gamma/ E_0} u_{E_0}$
does not contain \texttt{c}, so we cannot possibly have
$\sem{u_{E_0}} = \sem{t'_{E_0}} (= t'_{E_0})$. 
\end{example} 

The \POA\ model $(N,\rew_{\Gamma/\beta})$ obtained through Proposition
\ref{nf-poa} below is an abstraction of the envisaged computational
model $(N_{E_1 / E_0},\rew_{\Gamma/\sem{\_}})$.

\begin{proposition}\label{nf-poa}
Let $\beta \co B \to N$ be an nf-homomorphism and $\Gamma$ be a 
$\beta$-coherent set of transitions.
Then  $(N,\rew_{\Gamma/\beta})$ is a preordered
algebra and moreover $\beta \co (B,\rew_{\Gamma/B}) \to
(N,\rew_{\Gamma/\beta})$ is a \POA\ homomorphism.  
\end{proposition}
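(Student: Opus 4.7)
The plan is threefold: show that $\rew_{\Gamma/\beta}$ is a preorder on $N$, that every system-sorted operation of $N$ is monotone in it, and that $\beta$ is a \POA\ homomorphism. The first point is automatic, since $\rew_{\Gamma/\beta}$ is defined as the reflexive-transitive closure of $\lra_{\Gamma/\beta}$.

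The cornerstone of the remaining two items is the auxiliary observation that $\beta$-coherence lifts to reflexive-transitive closures: $\beta(\rew_{\Gamma/B}) \subseteq \rew_{\Gamma/\beta}$. Given a chain $b = b_0 \lra_{\Gamma/B} \cdots \lra_{\Gamma/B} b_n = b'$, $\beta$-coherence applied at each step yields $\beta b_i \rew_{\Gamma/\beta} \beta b_{i+1}$, and these are concatenated using the transitivity of $\rew_{\Gamma/\beta}$ (reflexivity handles the empty chain). This observation immediately yields the \POA\ homomorphism property of $\beta$, disposing of the third bullet.

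For the monotonicity of $N$'s operations, fix $\sigma \in F_{w \aar s}$ with $w = s_1 \dots s_k$ and $s \in S$, and tuples satisfying $n_j = n'_j$ when $s_j \in D$ and $n_j \rew_{\Gamma/\beta} n'_j$ when $s_j \in S$. By the standard ``change one coordinate at a time'' argument combined with induction on the length of each $\rew_{\Gamma/\beta}$-chain $n_j \rew_{\Gamma/\beta} n'_j$, it suffices to treat a single step $n_i \lra_{\Gamma/\beta} n'_i$ at one system-sorted position $i$. By definition of $\lra_{\Gamma/\beta}$ there exists $b'_i \in B$ with $n_i \lra_{\Gamma/B} b'_i$ and $\beta b'_i = n'_i$. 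Proposition \ref{trans-rel-poa} applied in $(B, \rew_{\Gamma/B})$ then gives $B_\sigma(n_1, \dots, n_i, \dots, n_k) \rew_{\Gamma/B} B_\sigma(n_1, \dots, b'_i, \dots, n_k)$, and applying $\beta$ to both sides -- using that $\beta$ is an \MSA\ homomorphism and that $\beta n_j = n_j$ for every $n_j \in N$ -- the two terms become $N_\sigma(n_1, \dots, n_k)$ and $N_\sigma(n_1, \dots, n'_i, \dots, n_k)$ respectively. The auxiliary observation supplies the required $\rew_{\Gamma/\beta}$-step.

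I expect the main subtlety to be the interaction between the ``ambient'' preorder $\rew_{\Gamma/B}$ on $B$ and the ``computational'' preorder $\rew_{\Gamma/\beta}$ on $N$: the proof hinges on $\beta$-coherence being precisely the ingredient that translates a single $\lra_{\Gamma/B}$-step on $B$ into a $\rew_{\Gamma/\beta}$-chain on $N$ after composition with $\beta$. This single translation is what simultaneously glues $N$'s algebraic structure to its preorder and makes $\beta$ monotone, so no separate argument is needed for the two claims.
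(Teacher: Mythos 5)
Your proposal is correct and follows essentially the same route as the paper: a single $\lra_{\Gamma/\beta}$-step is lifted back to a $\lra_{\Gamma/B}$-step via the witness $b'_i$, monotonicity of $B_\sigma$ (Proposition \ref{trans-rel-poa}) is applied in $B$, and the result is pushed down through $\beta$ using the homomorphism property, the invariance of $N$, and $\beta$-coherence, followed by the one-coordinate-at-a-time and transitivity argument. The only difference is cosmetic: you isolate as an explicit auxiliary observation the lifting of $\beta$-coherence from single steps to the reflexive-transitive closure, which the paper's proof uses silently in its step justifications.
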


\begin{proof}
We have to prove that the interpretation of the operation symbols by
$N$ are monotone with respect to $\rew_{\Gamma/\beta}$.
Let us consider an operation symbol $\sigma$ of a system sort.
Without any loss of generality, in order to simplify the presentation,
we assume that $\sigma$ takes precisely two arguments.
We first prove that:
\begin{equation}\label{nf-poa-e1}
  n_1 \lra_{\Gamma/\beta} n_2 \mbox{ \ implies \ }
  N_\sigma (n_1, n) \rew_{\Gamma/\beta} N_\sigma (n_2, n).
\end{equation}
By hypothesis there exists $m_2$ such that $n_1 \lra_{\Gamma/B} m_2$
and $\beta m_2 = n_2$.
Then
\begin{proofsteps}[stepsep=-2pt, justwidth=15em]
\step[beta1]{$B_\sigma (n_1, n) \rew_{\Gamma/B} B_\sigma (m_2,
  n)$}{$B_\sigma$ monotone with respect to $\rew_{\Gamma/B}$}
\step[beta2]{$\beta(B_\sigma (n_1, n)) \rew_{\Gamma/\beta} \beta(B_\sigma (m_2,
  n))$}{from \ref{beta1} since $\beta(\rew_{\Gamma/B}) \ \subseteq \
  \rew_{\Gamma/\beta}$ because $\Gamma$ is $\beta$-coherent}
\step[beta3]{$N_\sigma (\beta n_1, \beta n) \rew_{\Gamma/\beta} N_\sigma
  (\beta m_2, \beta n)$}{from \ref{beta2} since $\beta$ is \MSA\
  homomorphism}
\step[beta4]{$N_\sigma (n_1, n) \rew_{\Gamma/\beta} N_\sigma (n_2, n)$}{from
  \ref{beta3} since $\beta m_2 = n_2$ and $N$ is invariant wrt $\beta$.}
\end{proofsteps}
By transitivity we extend \eqref{nf-poa-e1} to
\begin{equation}\label{nf-poa-e2}
  n_1 \rew_{\Gamma/\beta} n_2 \mbox{ \ implies \ }
  N_\sigma (n_1, n) \rew_{\Gamma/\beta} N_\sigma (n_2, n).
\end{equation}
In a similar way we may prove that
\begin{equation}\label{nf-poa-e3}
  n_1 \rew_{\Gamma/\beta} n_2 \mbox{ \ implies \ }
  N_\sigma (n, n_1) \rew_{\Gamma/\beta} N_\sigma (n, n_2).
\end{equation}
Now let us consider $n_1, n'_1, n_2, n'_2$ arguments for $N_\sigma$
such that $n_1 \rew_{\Gamma/\beta} n_2$ and $n'_1 \rew_{\Gamma/\beta} n'_2$.
We have that:
\begin{proofsteps}[stepsep=-2pt, justwidth=15.5em]
\step[nf1]{$N_\sigma (n_1, n'_1) \rew_{\Gamma/\beta} N_\sigma (n_2,
  n'_1)$}{from \eqref{nf-poa-e2}}
\step[nf2]{$N_\sigma (n_2, n'_1) \rew_{\Gamma/\beta} N_\sigma (n_2,
  n'_2)$}{from \eqref{nf-poa-e3}}
\step[nf3]{$N_\sigma (n_1, n'_1) \rew_{\Gamma/\beta} N_\sigma (n_2,
  n'_2)$}{from \ref{nf1} and \ref{nf2} by the transitivity of
  $\rew_{\Gamma/\beta}$.} 
\end{proofsteps}
Thus $(N,\rew_{\Gamma/\beta})$ is a preordered algebra.
In addition to that, from the $\beta$-coherence assumption it follows 
that $(B,\rew_{\Gamma/B}) \to (N,\rew_{\Gamma/\beta})$ is a \POA\
homomorphism.   
\end{proof}

\begin{corollary}
If $\Gamma$ is $\sem{\_}$-coherent then $(N_{E_1 /   E_0},
\rew_{\Gamma/\sem{\_}})$ is a preordered algebra and $\sem{\_} \co
(0_{E_0}, \rew_{\Gamma/E_0}) \to (N_{E_1 /   E_0},
\rew_{\Gamma/\sem{\_}})$ is a \POA\ homomorphism. 
\end{corollary}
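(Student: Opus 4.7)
The plan is to recognize this as a direct instantiation of Proposition \ref{nf-poa} and simply verify that the hypotheses of that proposition are met when we specialize to $B = 0_{E_0}$, $N = N_{E_1 / E_0}$, and $\beta = \sem{\_}$.

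First I would unwind the setup to confirm it matches the abstract framework. The algebra $0_{E_0}$ is an \MSA\ $(D\cup S, F)$-algebra, and $N_{E_1 / E_0}$ is the sub-algebra of normal forms in $0_{E_0}$ under the confluent and terminating rewrite system $\rew_{E_1/E_0}$. The normalisation map $\sem{\_} \co 0_{E_0} \to N_{E_1 / E_0}$ is known from classical equational rewriting theory to be an \MSA\ homomorphism (this is the standard soundness/completeness consequence of confluence and termination, and is the content of the example immediately following Definition \ref{nf-homomorphism}).

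Next I would verify the two defining conditions of an nf-homomorphism in Definition \ref{nf-homomorphism}. The inclusion $N_{E_1 / E_0} \subseteq 0_{E_0}$ is immediate since normal forms are by definition elements of $0_{E_0}$. Invariance, i.e.\ $\sem{n} = n$ for each $n \in N_{E_1 / E_0}$, is just idempotence of normalisation: an element already in normal form has itself as its unique normal form under the confluent-terminating rewrite $\rew_{E_1 / E_0}$. Together, these make $\sem{\_}$ an nf-homomorphism in the sense of Definition \ref{nf-homomorphism}.

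With the nf-homomorphism property checked and the $\sem{\_}$-coherence of $\Gamma$ given by hypothesis, every premise of Proposition \ref{nf-poa} is satisfied. Instantiating that proposition then gives directly both conclusions: that $(N_{E_1 / E_0}, \rew_{\Gamma/\sem{\_}})$ is a preordered $(D,S,F)$-algebra, and that $\sem{\_} \co (0_{E_0}, \rew_{\Gamma/E_0}) \to (N_{E_1 / E_0}, \rew_{\Gamma/\sem{\_}})$ is a \POA\ homomorphism. There is essentially no obstacle here; the only thing to be careful about is citing the correct hypotheses (confluence and termination of $\rew_{E_1/E_0}$, which are the standing assumptions of the section) so that $\sem{\_}$ is genuinely an \MSA\ homomorphism and a well-defined function to the normal-form subalgebra.
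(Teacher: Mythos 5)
Your proposal is correct and matches the paper's (implicit) argument exactly: the corollary is stated as an immediate instantiation of Proposition \ref{nf-poa} with $B = 0_{E_0}$, $N = N_{E_1/E_0}$, $\beta = \sem{\_}$, the nf-homomorphism property of $\sem{\_}$ having already been established in the example on normal forms of term rewriting modulo axioms. Your extra care in verifying the subalgebra inclusion and idempotence of normalisation is exactly the right check and nothing is missing.
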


The role played by the $\beta$-coherence is to establish
$(N,\rew_{\Gamma/\beta})$ as a \POA\ model.
The following example shows that in its absence this may fail to be a
\POA\ model.

\begin{example}
Let us consider the example of the failure of $\beta$-coherence given
by \texttt{BETA-FAIL} of Example \ref{beta-fail}.
Then we have that
\begin{equation}\label{a0b0c0}
\texttt{[a 0] [b 0] [c 0]} \lra_{\Gamma/\beta} \texttt{[a 0] [b 1]}.
\end{equation}
because
$\texttt{[a 0] [b 0] [c 0]} \lra_{\Gamma/B}
\texttt{[a 0] [b 1] [c 0]}$ and\\
$\sem{\texttt{[a 0] [b 1] [c 0]}} = \texttt{[a 0] [b 1]}$.
Let us add $\texttt{[a 1] [c 1]}$ to each of the two multisets of
\eqref{a0b0c0}.
If the union of the multisets were monotone with respect to
$\rew_{\Gamma/\beta}$ then we would have that
\[
  \sem{\texttt{[a 0] [a 1] [b 0] [c 0] [c 1]}} \rew_{\Gamma/\beta}
  \sem{\texttt{[a 0] [a 1] [b 1] [c 1]}}
\]
which means
\[
  \texttt{[a 0] [a 1] [b 0] [c 0]} \rew_{\Gamma/\beta}
  \texttt{[a 0] [a 1] [b 1] [c 1]}.
\]
But this is not possible because there is no way to transform
\texttt{[c 0]} into \texttt{[c 1]}.
Therefore the union of the multisets is \emph{not} monotone with
respect to $\rew_{\Gamma/\beta}$. 
\end{example}

\subsection{The soundness of the operational semantics}

\begin{proposition}\label{soundness}
Under the hypotheses of Proposition \ref{nf-poa} if $\beta' \co N
\to B'$ is an \MSA\ homomorphism then  
$\beta' \co (N,\rew_{\Gamma/\beta}) \to (B',\rew_{\Gamma/B'})$ is
a \POA\ homomorphism. 
\end{proposition}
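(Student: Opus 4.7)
The plan is to reduce the claim to the adjunction property of Theorem \ref{rew-adjoint} applied to the composite \MSA\ homomorphism $\beta;\beta' \co B \to B'$. The key observation is that once we have a \POA\ homomorphism out of $(B, \rew_{\Gamma/B})$, the $\beta'$-image of a $\rew_{\Gamma/\beta}$-step can be bounded by pulling it back to a $\rew_{\Gamma/B}$-step in $B$ using the definition of $\lra_{\Gamma/\beta}$.

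First, I would observe that $\beta;\beta' \co B \to B'$ is an \MSA\ homomorphism (as a composition of two such), and that by Proposition \ref{poa-rew-sat}, $(B', \rew_{\Gamma/B'})$ is a preordered $(D,S,F)$-algebra satisfying $\Gamma$. Invoking Theorem \ref{rew-adjoint}, the \MSA\ homomorphism $\beta;\beta'$ lifts to a \POA\ homomorphism $(B, \rew_{\Gamma/B}) \to (B', \rew_{\Gamma/B'})$; that is, $b_1 \rew_{\Gamma/B} b_2$ implies $\beta'(\beta\, b_1) \rew_{\Gamma/B'} \beta'(\beta\, b_2)$.

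Next, to prove $\beta'$ is monotone with respect to $\rew_{\Gamma/\beta}$ and $\rew_{\Gamma/B'}$, it is enough by transitivity of $\rew_{\Gamma/B'}$ to handle a single step $n_1 \lra_{\Gamma/\beta} n_2$. By definition there exists $b_2 \in B$ with $n_1 \lra_{\Gamma/B} b_2$ and $\beta\, b_2 = n_2$. Since $N$ is invariant under $\beta$ (Definition \ref{nf-homomorphism}), $\beta\, n_1 = n_1$. Applying the \POA\ homomorphism $\beta;\beta'$ to the relation $n_1 \rew_{\Gamma/B} b_2$ yields $\beta'(\beta\, n_1) \rew_{\Gamma/B'} \beta'(\beta\, b_2)$, which by the two identifications above is exactly $\beta'(n_1) \rew_{\Gamma/B'} \beta'(n_2)$.

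I do not expect any real obstacle: the adjunction has already absorbed the difficulty of extending monotonicity across \MSA\ homomorphisms into arbitrary \POA\ models of $\Gamma$, and the remainder of the argument merely bridges $\rew_{\Gamma/\beta}$ back to $\rew_{\Gamma/B}$ via the invariance of $N$ under $\beta$. The only subtlety worth flagging explicitly is that $\beta'$ is assumed only to be an \MSA\ homomorphism, with no a priori monotonicity; the lift to a \POA\ homomorphism is legitimately granted by Theorem \ref{rew-adjoint} precisely because $(B',\rew_{\Gamma/B'})$ is a $\Gamma$-model.
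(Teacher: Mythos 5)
Your proposal is correct and follows essentially the same route as the paper's own proof: reduce a single step $n_1 \lra_{\Gamma/\beta} n_2$ to $n_1 \lra_{\Gamma/B} b_2$ with $\beta b_2 = n_2$, apply Theorem \ref{rew-adjoint} to the composite $\beta;\beta'$ to get a \POA\ homomorphism into $(B',\rew_{\Gamma/B'})$, use the invariance of $N$ under $\beta$, and close up by transitivity. Your explicit citation of Proposition \ref{poa-rew-sat} to justify that the adjunction applies is a detail the paper leaves implicit, but the argument is the same.
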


\begin{proof}
In this case we have only to prove that 
\begin{equation}\label{beta'}
\beta'(\rew_{\Gamma/\beta}) \ \subseteq \ \rew_{\Gamma/B'}. 
\end{equation}  
Let $n_1 \lra_{\Gamma/\beta} n_2$ and let $b_2 \in B$ such that $n_1
\lra_{\Gamma/B} b_2$ and $\beta b_2 = n_2$.
Since $\beta' \circ \beta \co B \to B'$ is an \MSA\ homomorphism, by
virtue of Theorem \ref{rew-adjoint} we have that
\[
\beta' \circ \beta \co (B,\rew_{\Gamma/B}) \to (B',\rew_{\Gamma/B'})
\]
is a \POA\ homomorphism.
Hence 
\[
\beta'(\beta n_1) \rew_{\Gamma/B'} \beta'(\beta b_2)
\]
which by the invariance assumption and since $\beta b_2 = n_2$ means
\[
\beta' n_1 \rew_{\Gamma/B'} \beta' n_2. 
\]
Hence $\beta'(\lra_{\Gamma/\beta}) \ \subseteq \ \rew_{\Gamma/B'}$.
This can be extended to \eqref{beta'} by using the transitivity of 
$\rew_{\Gamma/B'}$. 
\end{proof}

The property \eqref{thing2} now follows immediately:

\begin{corollary}[The concrete soundness of the computational
  method]\label{sound-poa-comp}  
  If $\Gamma$ is a $\sem{\_}$-coherent then
  $h^{-1} (\rew_{\Gamma/\sem{\_}}) \ \subseteq \ \rew_{\Gamma/E}$.  
\end{corollary}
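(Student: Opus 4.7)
The plan is to apply Proposition \ref{soundness} directly, instantiating its abstract parameters to the concrete data of the corollary. Specifically, I would set $B = 0_{E_0}$, $N = N_{E_1/E_0}$, $\beta = \sem{\_}$, $B' = 0_E$, and $\beta' = h^{-1}$. The hypotheses needed to apply the proposition (and its source, Proposition \ref{nf-poa}) are that $\sem{\_}$ is an nf-homomorphism, which follows from the standing assumption that $\rew_{E_1/E_0}$ is confluent and terminating, and that $\Gamma$ is $\sem{\_}$-coherent, which is exactly the hypothesis of the corollary.

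The single non-trivial point to flag is that $h^{-1} \co N_{E_1/E_0} \to 0_E$ is an \MSA\ homomorphism in the sense required by Proposition \ref{soundness}. This is immediate because $h$ is the canonical isomorphism between the initial algebra $0_E$ and the algebra of normal forms $N_{E_1/E_0}$ recalled at the start of the subsection, and by Fact \ref{iso-bij-fact} the inverse of an \MSA\ isomorphism is itself an \MSA\ homomorphism. With this observation made, Proposition \ref{soundness} yields directly that $h^{-1} \co (N_{E_1/E_0}, \rew_{\Gamma/\sem{\_}}) \to (0_E, \rew_{\Gamma/E})$ is a \POA\ homomorphism.

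The conclusion of the corollary is then just the unfolding of what it means to be a \POA\ homomorphism. Monotonicity on the system sorts states exactly that $n_1 \rew_{\Gamma/\sem{\_}} n_2$ implies $h^{-1}(n_1) \rew_{\Gamma/E} h^{-1}(n_2)$, which is the set-theoretic inclusion $h^{-1}(\rew_{\Gamma/\sem{\_}}) \subseteq \rew_{\Gamma/E}$ claimed by the statement. There is no real obstacle here, as the heavy lifting has been packaged into Proposition \ref{soundness} (and ultimately into the adjunction of Theorem \ref{rew-adjoint}); the only care required is to correctly identify the abstract parameters of Proposition \ref{soundness} with the concrete ones of the present computational setting, and to verify that $h^{-1}$ is genuinely an \MSA\ homomorphism before feeding it in as $\beta'$.
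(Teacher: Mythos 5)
Your proposal is correct and matches the paper's own proof exactly: the paper likewise instantiates Proposition \ref{soundness} with $B = 0_{E_0}$, $B' = 0_E$, $N = N_{E_1/E_0}$, $\beta = \sem{\_}$, and $\beta' = h^{-1}$. Your extra remarks --- that $h^{-1}$ is an \MSA\ homomorphism because $h$ is an isomorphism, and that the inclusion is just the monotonicity of the resulting \POA\ homomorphism --- are sound clarifications of steps the paper leaves implicit.
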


\begin{proof}
In Proposition \ref{soundness} we set $B = 0_{E_0}$, $B' = 0_E$, $N =
N_{E_1 / E_0}$, $\beta ' = h^{-1}$ (where $h$ is the isomorphism $0_E
\to  N_{E_1 / E_0}$ , $\beta = \sem{\_}$ (the nf-homomorphism
$0_{E_0} \to N_{E_1 / E_0}$).
\end{proof}

\subsection{The completeness of the operational semantics}

It is common in model theory that the completeness properties are more 
difficult to establish than the soundness properties.
This is also the case with the completeness of the \POA\ operational
semantics with respect to the denotational semantics, which needs some
additional conditions. 

\begin{proposition}\label{comp-poa-prop}
Let $\beta \co B \to N$ be an nf-homomorphism and let $\Gamma$ be a
$\beta$-coherent set of transitions that in addition satisfy the
following conditions:
\begin{itemize}

\item[--]
  For each transition $\lc{\forall{X} H \implies (t \trans t')}$ in
  $\Gamma$, $H$ is a finite conjunction of equations of the form $t_i
  = c_i$ where $c_i$ is a constant such that $B_{c_i} \in N$.

\item[--] For each sort $s$ of an equation $t_i = c_i$ that occurs in
  a condition of a transition in $\Gamma$ (like above):
  \[
   \begin{array}{lr}
   B \models \lc{\forall{x} \bigor \{ x = c \mid c\in F_{\to s}, \
     B_c \in N \}} &
     \mbox{(no junk)} \\[3pt]
     B \models \lc{\bigand} \{ c \neq c' \mid c \neq c' \in F_{\to
     s}, \ B_c, B_{c'} \in N  \} &
     \mbox{(no confusion)}  
   \end{array}                           
  \]
 
\end{itemize}
Then $(N,\rew_{\Gamma/\beta}) \models \Gamma$. 
\end{proposition}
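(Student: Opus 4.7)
My plan is to show that each transition $\hornsen{X}{H}{(t \trans t')}$ in $\Gamma$ holds in $(N, \rew_{\Gamma/\beta})$ by transferring any satisfying assignment on $N$ up to $B$, invoking the fact that $(B,\rew_{\Gamma/B}) \models \Gamma$ from Proposition~\ref{poa-rew-sat}, and pushing the resulting rewrite step back down to $N$ through $\beta$. The two extra axioms on constants are there precisely to make the upward transfer of the equational conditions from $N$ to $B$ valid; without them the approach breaks down at the key step.

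The set-up I would use is the following. Fix an assignment $\theta \co X \to N$ together with the associated $(D \cup S, F \cup X)$-expansions $N'$ of $N$ and $B'$ of $B$, and assume $(N',\rew_{\Gamma/\beta}) \models H$. Since $H$ is a conjunction of equations of the form $t_i = c_i$, this amounts to $N'_{t_i} = N_{c_i}$ for each $i$. Because $\theta$ takes values in $N$ and $N$ is invariant under $\beta$, the map $\beta$ extends to an \MSA\ homomorphism $\beta \co B' \to N'$, so by Lemma~\ref{term-pres-lem} we get $\beta B'_u = N'_u$ for every $(D \cup S, F \cup X)$-term $u$.

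The crucial step, which I expect to be the main obstacle, is the lifting of the condition $H$ from $N'$ to $B'$, and this is where the \emph{no junk} and \emph{no confusion} hypotheses come into play. For each equation $t_i = c_i$ in $H$, \emph{no junk} will give a constant $c$ with $B_c \in N$ such that $B'_{t_i} = B_c$; pushing this equality through $\beta$ and using $B_{c_i} \in N$ to identify $N_{c_i}$ with $B_{c_i}$ will yield $B_c = \beta B_c = \beta B'_{t_i} = N_{c_i} = B_{c_i}$, and \emph{no confusion} will then force $c = c_i$, so that $B'_{t_i} = B_{c_i}$ and hence $B' \models H$.

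Once $B' \models H$ is in hand, the rest is routine: Proposition~\ref{poa-rew-sat} gives $B'_t \rew_{\Gamma/B} B'_{t'}$; then the $\beta$-coherence of $\Gamma$, which extends to $\beta(\rew_{\Gamma/B}) \subseteq \rew_{\Gamma/\beta}$ by an easy induction on chain length using the transitivity of $\rew_{\Gamma/\beta}$, delivers $N'_t = \beta B'_t \rew_{\Gamma/\beta} \beta B'_{t'} = N'_{t'}$, which is exactly the required satisfaction. The careful bookkeeping between $B$, $B'$, $N$, and $N'$ in the lifting step is the only place where real care is required; everything else is a clean orchestration of Theorem~\ref{rew-adjoint} (via Proposition~\ref{poa-rew-sat}) with the coherence condition.
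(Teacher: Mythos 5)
Your proposal is correct and follows essentially the same route as the paper's proof: lift the satisfying assignment from $N$ to $B$, use \emph{no junk} plus \emph{no confusion} (via the invariance of $N$ under $\beta$) to transfer each condition $t_i = c_i$ from the $N$-expansion to the $B$-expansion, apply Proposition~\ref{poa-rew-sat} to get $B'_t \rew_{\Gamma/B} B'_{t'}$, and push back down through $\beta$ using $\beta$-coherence (the paper packages this last step as Proposition~\ref{nf-poa}, but the content is the same). The only cosmetic difference is that you derive $B_c = B_{c_i}$ directly and let \emph{no confusion} force $c = c_i$, where the paper phrases the same step as a reductio.
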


\begin{proof}
Let $\lc{\forall{X} H \implies (t \trans t')} \in \Gamma$ and $\ol{N}$
be an expansion of $N$ such that $\ol{N} \models H$. 
Let $(t_i = c_i) \in \ol{H}$.
Then $\ol{N} \models (t_i = c_i)$.
Let $\ol{B}$ be the expansion of $B$ such that for each variable $x\in
X$ we have $\ol{B}_x = \ol{N}_x$.
This determines also an expansion $\ol{\beta}$ of $\beta$.
Let us prove that $\ol{B} \models (t_i = c_i)$.

According to the conditions there exists a constant $c$ such that
$B_c \in N$ and $\ol{B}_{t_i} = B_c$.
By \emph{Reductio ad Absurdum} suppose that $c \neq c_i$.
Since $\ol{B}$ is a homomorphism we have that
\[
\ol{N}_{t_i} = \ol{\beta}(\ol{B}_{t_i}) = \beta(B_c) = N_c = \ol{N}_c
\]
hence $N_c = N_{c_i}$.
It follows that $B_c = N_c = N_{c_i} = B_{c_i}$ which contradicts the
conditions of the proposition.
Thus $c=c_i$ which means $\ol{B}_{t_i} = B_{c_i}$ which means $\ol{B}
\models (t_i = c_i)$.
Thus $\ol{B} \models H$.
Since $(B,\rew_{\Gamma/B}) \models \Gamma$ it follows that
$(\ol{B},\rew_{\Gamma/B}) \models \lc{(t \trans t')}$ which means
$\ol{B}_t \rew_{\Gamma/B} \ol{B}_{t'}$.

By Proposition \ref{nf-poa} it follows that $\ol{\beta}(\ol{B}_t)
\rew_{\Gamma/\beta} \ol{\beta}(\ol{B}_{t'})$.
By the homomorphism property of $\ol{\beta}$ we have that
$\ol{\beta}(\ol{B}_t) = \ol{N}_t$ and $\ol{\beta}(\ol{B}_{t'}) =
\ol{N}_{t'}$ hence $\ol{N}_t \rew_{\Gamma/\beta} \ol{N}_{t'}$. 
\end{proof}

\begin{corollary}\label{comp-poa-cor}
  Let $q \co B \to B'$ and $h \co B' \to N$ be \MSA\ homomorphisms.
  Let us assume that $\beta = h \circ q$ is an nf-homomorphism.
  Let $\Gamma$ be a set of transitions satisfying the conditions of
  Proposition \ref{comp-poa-prop}.
  Then $h \co (B',\rew_{\Gamma/B'}) \to (N,\rew_{\Gamma/\beta})$ is a
  \POA\ homomorphism. 
\end{corollary}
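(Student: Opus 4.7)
The plan is to reduce the claim to a direct application of Theorem \ref{rew-adjoint} by first certifying that $(N,\rew_{\Gamma/\beta})$ is itself an object of $\Mod(D,S,F,\Gamma)$, i.e.\ a preordered $(D,S,F)$-algebra that satisfies $\Gamma$. Once this is established, the MSA homomorphism $h \co B' \to N$ automatically lifts to a POA homomorphism, because the adjunction of Theorem \ref{rew-adjoint} is strongly persistent (its unit is the identity), so any MSA homomorphism from $B'$ into the $\mathcal U$-reduct of a model of $\Gamma$ is at the same time a POA homomorphism out of the free model $(B',\rew_{\Gamma/B'})$.

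Concretely I would proceed in three steps. First, unpack the hypotheses of the corollary: since $\Gamma$ satisfies the conditions of Proposition \ref{comp-poa-prop}, it is in particular $\beta$-coherent for $\beta = h \circ q$, and $\beta \co B \to N$ is an nf-homomorphism by assumption. Second, apply Proposition \ref{nf-poa} to these data to obtain that $(N,\rew_{\Gamma/\beta})$ is a preordered $(D,S,F)$-algebra. Third, apply Proposition \ref{comp-poa-prop} itself to conclude that $(N,\rew_{\Gamma/\beta}) \models \Gamma$. Together these put $(N,\rew_{\Gamma/\beta})$ in $\Mod(D,S,F,\Gamma)$.

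To finish, view $h \co B' \to N$ as an $(D\cup S,F)$-homomorphism into the underlying algebra $\mathcal{U}(N,\rew_{\Gamma/\beta}) = N$. The proof of Theorem \ref{rew-adjoint} then gives exactly the statement that such an $h$ is a POA homomorphism $(B',\rew_{\Gamma/B'}) \to (N,\rew_{\Gamma/\beta})$, which is what we had to show.

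The proof is essentially a bookkeeping exercise; the real work has already been done in Propositions \ref{nf-poa} and \ref{comp-poa-prop} and in Theorem \ref{rew-adjoint}. The only point that deserves attention is checking that the omnibus hypothesis ``$\Gamma$ satisfies the conditions of Proposition \ref{comp-poa-prop}'' indeed feeds both Proposition \ref{nf-poa} (via $\beta$-coherence) and Proposition \ref{comp-poa-prop} (via $\beta$-coherence together with the no-junk / no-confusion conditions on $B$); once this is observed, no further argument is required.
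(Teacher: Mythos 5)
Your proposal is correct and follows essentially the same route as the paper: Proposition \ref{comp-poa-prop} (whose hypotheses subsume the $\beta$-coherence needed for Proposition \ref{nf-poa}) certifies that $(N,\rew_{\Gamma/\beta})$ is a preordered algebra satisfying $\Gamma$, and then the universal property established in the proof of Theorem \ref{rew-adjoint} turns the \MSA\ homomorphism $h \co B' \to N$ into a \POA\ homomorphism $(B',\rew_{\Gamma/B'}) \to (N,\rew_{\Gamma/\beta})$. Your version is merely a bit more explicit than the paper's two-line proof about the intermediate appeal to Proposition \ref{nf-poa}; no gap.
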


\begin{proof}
  By Proposition \ref{comp-poa-prop} we have that
  $(N,\rew_{\Gamma/\beta})\models \Gamma$.
  The conclusion follows by Theorem \ref{rew-adjoint}. 
\end{proof}

\begin{corollary}[The completeness of the concrete computational
  method]\label{comp-poa-comp}  
  We assume the following conditions on $\Gamma$:
  \begin{itemize}

  \item[--] $\Gamma$ is $\sem{\_}$-coherent.

  \item[--] For each sentence $\hornsen{X}{H}{(t \trans t')}$ in
    $\Gamma$, $H$ is a finite conjunction of equations of the form
    $t_i = c_i$ where $c_i$ is a constant such that $(c_i)_{E_0}$ is a
    normal form. 

  \item[--] For each sort $s$ of an equation $t_i = c_i$ that occurs
    in a condition of a transition in $\Gamma$ (like above):
  \[
   \begin{array}{lr}
   0_{E_0} \models \lc{\forall{x} \bigor \{ x = c \mid c\in F_{\to s}, \
     c_{E_0} = \sem{c_{E_0}} \}} &
     \mbox{(no junk)} \\[3pt]
     0_{E_0} \models \lc{\bigand} \{ c \neq c' \mid c \neq c' \in F_{\to
     s}, \ c_{E_0} = \sem{c_{E_0}}, c'_{E_0} = \sem{c'_{E_0}}  \} &
     \mbox{(no confusion)}  
   \end{array}                           
  \]
  
  \end{itemize}
  then $h(\rew_{\Gamma/E}) \ \subseteq \ \rew_{\Gamma/\sem{\_}}$. 
\end{corollary}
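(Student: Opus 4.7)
The plan is to derive this corollary as a direct instantiation of Corollary \ref{comp-poa-cor}. Specifically, I take $B = 0_{E_0}$, $B' = 0_E$, $N = N_{E_1/E_0}$, with $q \co 0_{E_0} \to 0_E$ the unique \MSA\ homomorphism given by the initiality of $0_{E_0}$ (equivalently, the quotient by the congruence generated by $E_1$ modulo $E_0$), and $h \co 0_E \to N_{E_1/E_0}$ the isomorphism of initial $E$-algebras. Once the hypotheses of Corollary \ref{comp-poa-cor} are verified, it yields that $h$ is a \POA\ homomorphism $(0_E, \rew_{\Gamma/E}) \to (N_{E_1/E_0}, \rew_{\Gamma/\sem{\_}})$, which is exactly the inclusion $h(\rew_{\Gamma/E}) \subseteq \rew_{\Gamma/\sem{\_}}$ claimed.

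The first check is that $h \circ q = \sem{\_}$ and that $\sem{\_}$ is an nf-homomorphism from $0_{E_0}$ to $N_{E_1/E_0}$. The former follows from initiality: both $h \circ q$ and $\sem{\_}$ are \MSA\ homomorphisms $0_{E_0} \to N_{E_1/E_0}$, and $0_{E_0}$ is initial among $(D \cup S, F)$-algebras satisfying $E_0$, while $N_{E_1/E_0}$ is such an algebra, so there is only one such homomorphism. The nf-homomorphism property then reduces to noting that $N_{E_1/E_0} \subseteq 0_{E_0}$ (normal forms are particular $E_0$-classes of terms) and that $\sem{n} = n$ for any $n$ already in normal form, which is the defining property of normal forms in a confluent and terminating rewrite system modulo $E_0$.

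The second check is that the hypotheses of Proposition \ref{comp-poa-prop} hold with $B = 0_{E_0}$, $\beta = \sem{\_}$ and the given $\Gamma$. The first item ($\sem{\_}$-coherence) is assumed. For the second item, observe that in $B = 0_{E_0}$ the interpretation $B_{c_i}$ of a constant symbol $c_i$ is precisely the $E_0$-class $(c_i)_{E_0}$, so the requirement $B_{c_i} \in N$ is exactly the condition that $(c_i)_{E_0}$ is a normal form, which is our second hypothesis. The third item, the no-junk and no-confusion sentences on $B$, is our third hypothesis verbatim (with $B_c \in N$ translated as $c_{E_0} = \sem{c_{E_0}}$).

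With these verifications in place, Corollary \ref{comp-poa-cor} applies and delivers the conclusion. I do not expect a real obstacle here: the work is essentially notational, namely matching the abstract schema of Corollary \ref{comp-poa-cor} to the concrete triple $(0_{E_0}, 0_E, N_{E_1/E_0})$. If anything, the most delicate point to articulate carefully is the identification $h \circ q = \sem{\_}$, since it is this factorisation that allows the abstract soundness/completeness machinery of Propositions \ref{nf-poa}--\ref{comp-poa-prop} to be applied to the concrete two-level rewriting algorithm used by Maude and CafeOBJ.
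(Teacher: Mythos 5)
Your proposal is correct and follows exactly the same route as the paper, which proves this corollary by instantiating Corollary \ref{comp-poa-cor} with $B = 0_{E_0}$, $B' = 0_E$, $N = N_{E_1/E_0}$, $\beta = \sem{\_}$; you merely spell out the verifications (the factorisation $h \circ q = \sem{\_}$ via initiality, the nf-homomorphism property, and the translation of the no-junk/no-confusion hypotheses) that the paper leaves implicit.
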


\begin{proof}
In Corollary \ref{comp-poa-cor}. we set $B = 0_{E_0}, B' = 0_E,
N=N_{E_1 / E_0}, \beta = \sem{\_}$. 
\end{proof}

Corollaries \ref{sound-poa-comp} and \ref{comp-poa-comp} together give
the desired isomorphism \eqref{poa-comp}. 

\subsubsection*{On the conditions underlying completeness}
The situation of the specific conditions of Proposition
\ref{comp-poa-prop}, also 
propagated as conditions of Corollaries \ref{comp-poa-cor} and
\ref{comp-poa-comp}, remind us of the so-called \emph{minimax}
principle present in many areas of mathematics.
From the mathematical side they appear quite restrictive, but from
the applications side they appear quite permissive.
Their great applicability comes from the treatment of conditions as
Boolean terms.
This is the only treatment of conditions available in CafeOBJ, and is
available also in Maude.\footnote{In both cases this was inherited
  from OBJ3.}
As already mentioned this approach has much greater expressivity power
than the standard approach to conditions as conjunctions of atoms.
In that approach a condition $H$ is an equation of the Boolean sort of
the form $\tau = \top$ (true) or $\tau = \bot$ (false), where $\tau$
is a term of the Boolean sort.
Then the `no junk' condition says that the interpretations of $\top$
and $\bot$ are the only elements of $0_E$ and the `no confusion'
condition say that these interpretations are different. 
In fact because the Boolean data type is always imported in
`protected' mode, for all correct programs this situation holds in all
models.  
That $\top$ and $\bot$ are in normal form is trivial because we never
place them in the lefthand sides of equations that we write in our
programs, in fact in the programs we apriori think of them as playing
roles of normal forms.
The following example shows how this works in practice.

\begin{example}
  Let us consider the benchmark example of \texttt{BUBBLE-SORT}.
  The condition of the transition is \texttt{(n < m) = true} which is
  indeed of the form $\tau = \top$.
  Moreover in all \texttt{BUBBLE-SORTS} variants discussed in this
  paper the data type of the Booleans is of course `protected' by the
  `no junk' and `no confusion' conditions. 
\end{example}

There is the question regarding the apparent absence of the $\htr$
component from the conditions. 
In the Boolean terms approach the atomic transitions can be present in
a coded format by employing for each system sort a predicate for the
reachability relation.
For instance CafeOBJ implements these; they are denoted \texttt{==>}.
But Maude does not do this, which means that its approach to
conditions is weaker.

In general the theory around Maude does not consider transitions
conditioned by Boolean terms under the model theoretic constraint that
the data type of the Booleans is protected.
From that literature the closest to this comes the ``generalised
rules'' of \cite{meseguer2020} that allows conditions to be arbitrary
quantifier-free sentences. 

With respect to the coherence conditions, the literature around Maude
puts forward  conditions for the completeness of the reachability
computations that bear some similarity to our $\beta$-coherence, but
they play a different role than in our work, of a mere syntactic
nature.
While in our approach the $\beta$-coherence condition represents a
cause for the existence of the computational model as preordered
algebra, in \cite{meseguer2020} this is not the case due to the
primitive structure of their models.

\section{Compositionality of algebraic rewriting}
\label{comp-sec}

Large programs are built efficiently from smaller ones by using
modularisation techniques such as module imports or parameterised
modules.
One of the established mathematical foundations for these is the
pushout technique
\cite{bg80,ins,modalg,iimt,AlgStrucSpec,sannella-tarlecki-book}, etc.
In this section we develop a result on the compositionality of the
algebraic rewriting relation in the context of the pushout technique for
modularisation.
This modularisation technique is by far best defined and developed
within the institution theoretic framework.
It relies on two rather technical concepts: model amalgamation and
theory morphisms.
Below we recall them briefly without directly involving any
institution theory.
Unless specified explicitly all definitions and results below apply to
both \MSA\ and \POA. 

\subsubsection*{Model amalgamation}
A commutative square of signature morphisms like below:
\[
  \xymatrix{
  \Sigma \ar[r]^{\varphi_1} \ar[d]_{\varphi_2} & \Sigma_1
  \ar[d]^{\theta_1} \\
  \Sigma_2 \ar[r]_{\theta_2} & \Sigma'
  }
\]
is a \emph{model amalgamation} square when
\begin{itemize}

\item for any $\Sigma_k$-models $M_k$, $k=\ol{1,2}$, such that
  $\varphi_1 M_1 = \varphi_2 M_2$ there exists an unique
  $\Sigma'$-model $M'$ such that $\theta_k M' = M_k$, $k=1,2$, and 

\item the same property like above when `model' is replaced by `model
  homomorphism'. 

\end{itemize}
The \MSA\ version of the following result appears in many places in
the algebraic specification literature, but its \POA\ version is less
known because \POA\ is less known.
However the \POA\ version can also be found in some places such as
\cite{iimt}.\footnote{For the abridged version of the \POA.}

\begin{proposition}\label{push-amg}
Any pushout square of signature morphisms is a model amalgamation
square. 
\end{proposition}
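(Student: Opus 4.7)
The plan is to reduce the statement to the explicit componentwise construction of pushouts of signatures, and then amalgamate models sort-by-sort and operation-by-operation, using the universal property of the pushout of sorts to obtain well-definedness and uniqueness.

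First I would recall the explicit form of pushouts in the category of \POA\ signatures (the \MSA\ case is formally subsumed by taking $D = \emptyset$). Given the span $\Sigma_1 \leftarrow \Sigma \to \Sigma_2$, the pushout $\Sigma'=(D',S',F')$ is built componentwise: $D'$ and $S'$ are the set-theoretic pushouts of data sorts and system sorts respectively (here one uses that $\varphi_k$ preserves the data/system partition), and $F'_{w'\aar s'}$ is computed as the corresponding colimit of operation symbol sets once the arities/sorts have been relabeled via $\theta_1^{\st}, \theta_2^{\st}$. The key set-theoretic fact I would exploit is that two symbols $x_1 \in \Sigma_1$, $x_2 \in \Sigma_2$ satisfy $\theta_1 x_1 = \theta_2 x_2$ in $\Sigma'$ if and only if there is a zigzag connecting them through symbols of $\Sigma$.

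Next, given $\Sigma_k$-algebras $M_k$ with $\varphi_1 M_1 = \varphi_2 M_2$, I would define the amalgamation $M'$ as follows. For each sort $s' \in D' \cup S'$, pick any $s_k \in \Sigma_k$ with $\theta_k^{\st} s_k = s'$ and set $M'_{s'} = (M_k)_{s_k}$; do the analogous thing for each $\sigma' \in F'$. Well-definedness follows from the zigzag description above together with the hypothesis $\varphi_1 M_1 = \varphi_2 M_2$: any two choices are connected by a chain whose every link is forced to give the same carrier or operation interpretation by the matching-reducts assumption. For the \POA\ layer, I would additionally set $\leqslant_{s'}\ =\ \leqslant_{s_k}$ for each system sort $s'$, with well-definedness obtained from the same zigzag argument applied to the preorder components of $M_1$ and $M_2$. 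Monotonicity of each $M'_{\sigma'}$ is inherited directly from monotonicity of $M_k$, since $\sigma'$ is (up to the choice of representative) an operation of $\Sigma_k$. By construction $\theta_k M' = M_k$, and any other amalgamation must agree with $M'$ on every symbol of the form $\theta_k x$, hence on all of $\Sigma'$; this yields uniqueness.

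For the homomorphism clause, the argument is strictly parallel: given $(\Sigma_k)$-homomorphisms $h_k \co M_k \to N_k$ with matching $\Sigma$-reducts, I would amalgamate $M_k$ and $N_k$ into $M'$, $N'$ as above and then define $h'_{s'} = (h_k)_{s_k}$ for each chosen representative $s_k$ of $s'$. Well-definedness, the homomorphism equations, and monotonicity on system sorts all reduce to the corresponding properties of $h_k$ via exactly the same zigzag reasoning. I expect the main obstacle to be purely bookkeeping: carefully verifying well-definedness of the choices against the pushout construction of $\Sigma'$, and keeping the data/system partition of sorts aligned throughout. There is no deeper difficulty because the \POA\ adornments (preorders and monotonicity) are themselves carried sortwise and inherited from the summands $M_1, M_2$.
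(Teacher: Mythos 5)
The paper does not prove this proposition at all---it states it and defers to the literature (e.g.\ \cite{iimt}) for both the \MSA\ and the \POA\ versions---so there is no in-paper proof to compare against; your componentwise construction, with the zigzag description of the set-theoretic pushout giving well-definedness of the representative-based definition of $M'$, is exactly the standard argument found in those references, and it is correct, including the point that the data/system partition survives because the generating relation never identifies a data sort with a system sort. The only steps that genuinely need to be written out are the joint surjectivity of $\theta_1,\theta_2$ on sorts and on operation symbols (which makes your choice of representatives total and also yields the uniqueness clause) and the reindexing of the operation-symbol families over $S'^*\times S'$ before forming their pushout; both are routine bookkeeping, as you anticipate.
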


The concept of model amalgamation has many variations in the
literature.
Sometimes it considers only the condition on the models and drops that
on homomorphisms, other times it drops the uniqueness requirement.
Model amalgamation is a property that holds naturally in many
contexts.
For instance the model $M'$ just puts together the interpretations of
sorts, operations, preorders, provided by $M_1$ and $M_2$.
In order for this to be possible it is necessary that the two
collections of interpretations are mutually consistent, i.e. they
share the same interpretations of the common symbols. 

\subsubsection*{Theory morphisms}
A \emph{theory} is a pair $(\Sigma,E)$ that consists of a signature
$\Sigma$ and a set $E$ of $\Sigma$-sentences.
A \emph{$(\Sigma,E)$-model} is a $\Sigma$-model that satisfies $E$.
A \emph{theory morphism} $\varphi \co (\Sigma,E) \to (\Sigma',E')$ is
a signature morphism $\varphi \co \Sigma\to \Sigma'$ such that for
each $(\Sigma',E')$-model its $\varphi$-reduct is a
$(\Sigma,E)$-model.

\begin{fact}
The composition of theory morphisms yields a theory morphism.
\end{fact}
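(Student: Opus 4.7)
The plan is to argue directly from the definition. Given theory morphisms $\varphi_1 \co (\Sigma_1, E_1) \to (\Sigma_2, E_2)$ and $\varphi_2 \co (\Sigma_2, E_2) \to (\Sigma_3, E_3)$, I want to show that their composite $\varphi_1; \varphi_2 \co (\Sigma_1, E_1) \to (\Sigma_3, E_3)$ satisfies the definition of theory morphism given earlier.

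First, since the composition of signature morphisms is a signature morphism (recalled in the preliminaries on \MSA\ signature morphisms, and inherited by \POA\ signature morphisms from their \MSA\ underpinning), $\varphi_1; \varphi_2 \co \Sigma_1 \to \Sigma_3$ is a well-defined signature morphism. The remaining work is to check the semantic closure condition: that the $(\varphi_1; \varphi_2)$-reduct of any $(\Sigma_3, E_3)$-model is a $(\Sigma_1, E_1)$-model.

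Fix a $(\Sigma_3, E_3)$-model $M_3$. I would then chain the two hypotheses: applying the theory morphism property of $\varphi_2$ gives that $\varphi_2 M_3$ is a $(\Sigma_2, E_2)$-model; applying next the theory morphism property of $\varphi_1$ to this $(\Sigma_2, E_2)$-model gives that $\varphi_1(\varphi_2 M_3)$ is a $(\Sigma_1, E_1)$-model. The conclusion then reduces to the contravariant functoriality of the reduct operation, namely the identity $(\varphi_1; \varphi_2) M_3 = \varphi_1(\varphi_2 M_3)$, which follows component-wise: for each sort or operation symbol $x$ from $\Sigma_1$,
\[
((\varphi_1;\varphi_2) M_3)_x = (M_3)_{(\varphi_1;\varphi_2) x} = (M_3)_{\varphi_2(\varphi_1 x)} = (\varphi_2 M_3)_{\varphi_1 x} = (\varphi_1(\varphi_2 M_3))_x,
\]
with the analogous equation for homomorphisms and, in the \POA\ case, the preorder components inherited via the same indexing. (This functoriality is already implicit in the definition of reduct given in the preliminaries.)

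The only step that requires any real care is this last reduct-composition identity, but it is a direct unwinding of the definition of reduct along a signature morphism and poses no genuine obstacle. Note also that this argument works uniformly for \MSA\ and \POA, since the \POA\ reduct is defined by applying the \MSA\ reduct to the underlying algebra and keeping the same preorder family, so functoriality of reducts in \POA\ follows from functoriality in \MSA.
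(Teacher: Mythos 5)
Your proof is correct and is exactly the argument the paper leaves implicit by stating this as a \emph{Fact} without proof: one chains the two model-reduct conditions and uses the component-wise identity $(\varphi_1;\varphi_2)M_3 = \varphi_1(\varphi_2 M_3)$, which you verify directly from the definition of reduct. Nothing further is needed.
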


Pushouts and model amalgamation can be lifted from signature morphisms
to theory morphisms.

\begin{proposition}\label{push-theories}
Any span of theory morphisms has a pushout that inherits a pushout of
the underlying span of signature morphisms. 
\end{proposition}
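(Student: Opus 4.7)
The plan is to construct the pushout of theories on top of the pushout of the underlying signatures, taking $E'$ to be the union of the translations of the sentence sets.

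First, I would build the pushout of the underlying span of signature morphisms $\varphi_1 \co \Sigma \to \Sigma_1$ and $\varphi_2 \co \Sigma \to \Sigma_2$, obtaining $\theta_1 \co \Sigma_1 \to \Sigma'$ and $\theta_2 \co \Sigma_2 \to \Sigma'$ with $\varphi_1 ; \theta_1 = \varphi_2 ; \theta_2$. (For \MSA\ and for \POA\ signatures such pushouts exist; in \POA\ the partition $D' \uplus S'$ is determined componentwise by the corresponding \MSA\ pushout.) Next, I would set
\[
E' \;=\; \theta_1(E_1) \,\cup\, \theta_2(E_2),
\]
so $(\Sigma',E')$ is the candidate pushout theory.

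Second, I would check that $\theta_k \co (\Sigma_k,E_k) \to (\Sigma',E')$, for $k=1,2$, is a theory morphism. Given any $(\Sigma',E')$-model $M'$, it suffices to show $\theta_k M' \models E_k$. By construction $M' \models \theta_k(E_k)$, so by the Satisfaction Lemma (Theorem \ref{sat-lem-msa}, and its \POA\ extension in Section \ref{poa-institution}) we have $\theta_k M' \models E_k$, as required. The commutativity of the square at the level of theories is inherited directly from the signature-level commutativity.

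Third, I would verify the universal property. Suppose $\psi_k \co (\Sigma_k,E_k) \to (\Sigma'',E'')$ are theory morphisms with $\varphi_1 ; \psi_1 = \varphi_2 ; \psi_2$. At the signature level the pushout yields a unique $\psi' \co \Sigma' \to \Sigma''$ with $\theta_k ; \psi' = \psi_k$; uniqueness at the theory level is then automatic, so the only content is that $\psi'$ is a theory morphism. Given a $(\Sigma'',E'')$-model $M''$, I must show $\psi' M'' \models E'$. For any $\rho \in E_k$, the Satisfaction Lemma gives $\psi' M'' \models \theta_k \rho$ iff $\theta_k (\psi' M'') \models \rho$, and $\theta_k (\psi' M'') = (\theta_k ; \psi') M'' = \psi_k M''$ which satisfies $E_k$ because $\psi_k$ is a theory morphism. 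Hence $\psi' M'' \models \theta_k(E_k)$ for $k=1,2$, so $\psi' M'' \models E'$.

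Finally, the statement that the theory pushout ``inherits'' the signature pushout is built into the construction: forgetting the sentence components of $(\Sigma,E), (\Sigma_1,E_1), (\Sigma_2,E_2), (\Sigma',E')$ returns the chosen pushout square of signature morphisms. The only step that is not purely diagrammatic is the use of the Satisfaction Lemma to transport satisfaction across $\theta_k$ and $\psi'$; this is the essential ingredient and is the reason why the argument works uniformly in both \MSA\ and \POA. I do not anticipate a genuine obstacle, but the step to watch is the second one: one must take $E'$ as the translations rather than merely $E_1 \cup E_2$ (which lives in the wrong signature), and one must invoke the Satisfaction Lemma in both directions (once to see that the $\theta_k$ are theory morphisms, once to establish the universal mediator).
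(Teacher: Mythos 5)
Your construction is correct and is exactly the standard one: the paper states this proposition without proof (deferring to the institution-theoretic literature), and the intended argument is precisely yours — take the signature pushout, set $E' = \theta_1(E_1) \cup \theta_2(E_2)$, and use the Satisfaction Lemma both to see that the injections are theory morphisms and that the signature-level mediator is one. Your closing caveats (translate the axioms rather than taking a raw union, and note that uniqueness of the mediator is inherited from the signature level) are the right points to flag.
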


\begin{proposition}\label{amg-th}
Any pushout square of theory morphisms is a model amalgamation
square. 
\end{proposition}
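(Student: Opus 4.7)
The strategy is to reduce theory-level amalgamation to the signature-level result (Proposition \ref{push-amg}) and then verify the additional satisfaction conditions by appealing to the Satisfaction Lemma. By Proposition \ref{push-theories}, any pushout square of theory morphisms $\varphi_k \co (\Sigma,E) \to (\Sigma_k, E_k)$ and $\theta_k \co (\Sigma_k, E_k) \to (\Sigma', E')$ (for $k=1,2$) has its underlying span of signature morphisms completed to a pushout of signatures; by Proposition \ref{push-amg} this underlying signature square is a model amalgamation square.

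Now suppose we are given $(\Sigma_k, E_k)$-models $M_k$ with $\varphi_1 M_1 = \varphi_2 M_2$. Since the $M_k$ are in particular $\Sigma_k$-models, signature-level amalgamation yields a unique $\Sigma'$-model $M'$ such that $\theta_k M' = M_k$ for $k=1,2$. It remains to verify that $M' \models E'$. By the construction of the theory pushout that underlies Proposition \ref{push-theories}, $E'$ is generated by the translated axioms $\theta_1 E_1 \cup \theta_2 E_2$, so it suffices to show $M' \models \theta_k \rho$ for each $\rho \in E_k$. By the Satisfaction Lemma (Theorem \ref{sat-lem-msa} and its \POA\ extension), $M' \models \theta_k \rho$ iff $\theta_k M' = M_k \models \rho$, and the right-hand side holds by hypothesis. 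Uniqueness of $M'$ at the theory level is inherited directly from uniqueness at the signature level.

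The homomorphism part is analogous. Given $(\Sigma_k, E_k)$-homomorphisms $h_k$ between amalgamable pairs of $(\Sigma_k,E_k)$-models, signature-level amalgamation produces a unique $\Sigma'$-homomorphism $h'$ whose $\theta_k$-reducts are the $h_k$. Since the source and target of $h'$ are already known to satisfy $E'$ by the argument above, and since $(\Sigma',E')$-homomorphisms carry no structure beyond that of $\Sigma'$-homomorphisms between $(\Sigma',E')$-models, $h'$ is automatically a $(\Sigma',E')$-homomorphism.

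The main delicate point is the claim that the axioms of the theory pushout are exhausted (semantically) by the translations $\theta_1 E_1 \cup \theta_2 E_2$; this is the content of the theory-pushout construction that is packaged inside Proposition \ref{push-theories}, and once it is in hand, the rest of the argument is a routine combination of the Satisfaction Lemma with the signature-level amalgamation property. Everything works in parallel for \MSA\ and \POA\ because both institutions enjoy the Satisfaction Lemma and signature-pushout amalgamation in the same form.
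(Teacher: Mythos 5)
Your argument is correct and is precisely the standard institution-theoretic lifting that the paper has in mind: the paper itself gives no proof, remarking only that the result holds at the level of abstract institutions, and your reduction to Proposition \ref{push-amg} via the Satisfaction Lemma is that argument made explicit. The point you flag as delicate — that the pushout theory over the signature pushout has, up to semantic equivalence, axioms $\theta_1 E_1 \cup \theta_2 E_2$, so that $M' \models E'$ follows from $\theta_k M' = M_k \models E_k$ by the Satisfaction Lemma — is indeed what the construction behind Proposition \ref{push-theories} supplies, and the only other small step worth recording is that an arbitrary pushout square of theory morphisms is isomorphic to this canonical one with the isomorphism carried by a signature isomorphism, so the underlying signature square is always a signature pushout and your reduction applies to every such square, while fullness of $\Mod(\Sigma',E')$ in $\Mod(\Sigma')$ settles the homomorphism clause exactly as you say.
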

The last proposition lifts the model amalgamation property from 
signature morphisms to theory morphisms.
This works at an abstract general level, not only for \MSA\ and \POA,
but that generality requires the mathematical machinery of institution
theory.  

\subsubsection*{Amalgamation of rewriting}
The result of this section, developed below, can be read in two ways.
From the perspective of model amalgamation theory it says that
\noindent
\begin{quote}
the amalgamation of algebraic rewriting relations yields an algebraic
rewriting relation.
\end{quote}
In other words, the amalgamation of the free \POA\ models (as given by
Theorem \ref{rew-adjoint}) yields a free \POA\ model.
But from an operational perspective it says that any algebraic
rewriting of the result of putting together two sets of transitions
happens in one of the components, provided that the component
algebraic rewriting relations are mutually consistent. 

\begin{theorem}
Consider a pushout square of \POA\ signature morphisms like in the
diagram below:
\[
  \xymatrix{
  (S,D,F) \rto^{\varphi_1} \dto_{\varphi_2} & (S_1, D_1, F_1)
  \dto^{\theta_1} \\
  (S_2, D_2, F_2) \rto_{\theta_2} & (S', D', F')
  }
\]
Let $B'$ be any \POA\ $(S',D',F')$-algebra and let $\Gamma_1,
\Gamma_2$ be sets of transitions for the signatures $(S_1, D_1, F_1)$
and $(S_2, D_2, F_2)$, respectively.
Let $\Gamma' = \theta_1 \Gamma_1 \cup \theta_2 \Gamma_2$.
If for each $b, b' \in \varphi_1 (\theta_1 B') =  \varphi_2 (\theta_2 B')$
\[
  b \rew_{\Gamma_1/ \theta_1 B'} b' \mbox{ \ if and only if \ }
  b \rew_{\Gamma_2/ \theta_2 B'} b'
\]
then
\[
  \rew_{\Gamma'/B'} \ = \
  \rew_{\Gamma_1 / \theta_1 B'} \ \cup \ \rew_{\Gamma_2 / \theta_2 B'}. 
\]
\end{theorem}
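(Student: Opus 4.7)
The plan is to prove the two inclusions separately. For $\rew_{\Gamma_i / \theta_i B'} \subseteq \rew_{\Gamma' / B'}$ ($i = 1, 2$), I would argue by induction on the stage $k$ that $(\Gamma_i)_{\theta_i B', k} \subseteq \Gamma'_{B', k}$. The step uses that $(\theta_i B')_\sigma = B'_{\theta_i \sigma}$, that any $(S_i, D_i, F_i)$-context and any rule $\hornsen{X}{H}{(t \trans t')} \in \Gamma_i$ translate via $\theta_i$ into an $(S', D', F')$-context and into the sentence $\theta_i \gamma \in \Gamma'$, and that a substitution $\theta \co X \to \theta_i B'$ is literally a substitution into $B'$ once the variables of $X$ are renamed by $\theta_i^\st$. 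So every single step of $\rew_{\Gamma_i / \theta_i B'}$ is, verbatim, a single step of $\rew_{\Gamma' / B'}$, and taking reflexive-transitive closures and unions gives $\rew_{\Gamma_1 / \theta_1 B'} \cup \rew_{\Gamma_2 / \theta_2 B'} \subseteq \rew_{\Gamma' / B'}$.

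For the converse inclusion I would introduce the family $\Omega$ on the system sorts of $(S', D', F')$ obtained by transporting $\rew_{\Gamma_i / \theta_i B'}$ along $\theta_i^\st$ for $i = 1, 2$ and taking the union. By the pushout structure each sort of $(S', D', F')$ lies in the image of at least one $\theta_i^\st$, while the shared sorts arise from $(S, D, F)$; on these shared sorts the agreement hypothesis forces the two candidate relations to coincide, so $\Omega_{s'}$ is already a preorder at every system sort of $S'$. The same pushout property gives monotonicity: every operation symbol of $F'$ equals $\theta_i \sigma$ for some $\sigma \in F_i$, whence its whole arity lies in the image of $\theta_i^\st$ and the monotonicity established in Proposition \ref{trans-rel-poa} for $(\theta_i B', \rew_{\Gamma_i / \theta_i B'})$ transports directly to monotonicity of $\theta_i \sigma$ for $\Omega$ in $B'$ — again relying on the hypothesis whenever an input sort happens to be shared. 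Hence $(B', \Omega)$ is a preordered $(S', D', F')$-algebra.

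The next step is to check $(B', \Omega) \models \Gamma'$. For any $\theta_i \gamma \in \theta_i \Gamma_i$, the Satisfaction Lemma for \POA\ of Section \ref{poa-institution} reduces this to $\theta_i (B', \Omega) \models \gamma$; by construction of $\Omega$ the reduct $\theta_i (B', \Omega)$ equals $(\theta_i B', \rew_{\Gamma_i / \theta_i B'})$, which satisfies $\Gamma_i$ by Proposition \ref{poa-rew-sat}. Applying Theorem \ref{rew-adjoint} to the identity $(D' \cup S', F')$-homomorphism $B' \to B'$ then yields that the identity is also a \POA-homomorphism $(B', \rew_{\Gamma' / B'}) \to (B', \Omega)$, which is exactly $\rew_{\Gamma' / B'} \subseteq \Omega$. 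I expect the main obstacle to be the pushout bookkeeping in the second paragraph — showing that $\Omega$ is genuinely a preordered algebra requires invoking the agreement hypothesis precisely at the sorts of $S'$ arising from $S$, since without it neither transitivity on shared sorts nor the monotonicity of the shared operations would be guaranteed.
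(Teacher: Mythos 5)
Your proof is correct, but it reaches the conclusion by a noticeably different route than the paper. The paper does not split the equality into two inclusions: it forms the square of \POA\ \emph{theory} morphisms with $\Gamma_1,\Gamma_2,\Gamma'$, invokes Proposition \ref{amg-th} to amalgamate $(\theta_1 B',\rew_{\Gamma_1/\theta_1 B'})$ and $(\theta_2 B',\rew_{\Gamma_2/\theta_2 B'})$ into a single preordered algebra $(B',\leqslant)$ satisfying $\Gamma'$, and then proves $(B',\leqslant)$ is \emph{free} over $B'$ by reducing an arbitrary comparison homomorphism along $\theta_1,\theta_2$, applying Theorem \ref{rew-adjoint} componentwise, and reassembling via the uniqueness of \emph{homomorphism} amalgamation (Proposition \ref{push-amg}); uniqueness of free objects with identity unit then gives $\leqslant\;=\;\rew_{\Gamma'/B'}$, and both inclusions drop out simultaneously. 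Your $\Omega$ is exactly the paper's amalgamated preorder $\leqslant$, except that you construct it by hand and must therefore shoulder the pushout bookkeeping (surjectivity of $\theta_1^\st\cup\theta_2^\st$ onto $S'$, coincidence of the two transported relations at shared sorts, and — a case you should not gloss over — sorts identified by a single non-injective $\theta_i^\st$, where the agreement hypothesis must be chained along the zigzag defining the pushout quotient) that the paper delegates wholesale to Propositions \ref{push-amg} and \ref{amg-th}. Your containment $\rew_{\Gamma'/B'}\subseteq\Omega$ via Theorem \ref{rew-adjoint} applied to $1_{B'}$ is a clean substitute for the paper's homomorphism-amalgamation step. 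Your other inclusion, the stage-by-stage induction showing each $\Gamma_i$-rewrite step transports verbatim along $\theta_i$, is more elementary than anything in the paper and notably needs neither the pushout nor the agreement hypothesis; you could also obtain it with less work from the adjunction itself, since $(B',\rew_{\Gamma'/B'})\models\theta_i\Gamma_i$ implies $\theta_i(B',\rew_{\Gamma'/B'})\models\Gamma_i$ by the Satisfaction Lemma, whence Theorem \ref{rew-adjoint} applied to $1_{\theta_i B'}$ gives $\rew_{\Gamma_i/\theta_i B'}$ contained in the reduct of $\rew_{\Gamma'/B'}$. In short: the paper buys brevity and generality by working at the level of institution-theoretic amalgamation; you buy explicitness about where the agreement hypothesis actually bites, at the cost of combinatorial bookkeeping the paper never has to surface.
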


\begin{proof}
We have that
\[
  \xymatrix{
  ((S,D,F),\emptyset) \rto^{\varphi_1} \dto_{\varphi_2} & ((S_1, D_1,
  F_1), \Gamma_1)
  \dto^{\theta_1} \\
  ((S_2, D_2, F_2), \Gamma_2) \rto_{\theta_2} & ((S', D', F'), \Gamma')
  }
\]
is a pushout square in the category of the \POA\ theory morphisms.
Then we can apply Proposition \ref{amg-th} for \POA.
Hence the square above is a model amalgamation square.

Let $B = \varphi_1 (\theta_1 B') =  \varphi_2 (\theta_2 B')$.
Then $(B,\leqslant)$ is a preordered algebra where $\leqslant$ is
defined by $b \leqslant b'$ if and only if $b \rew_{\Gamma_k/ \theta_k
  B'} b'$. 
Moreover
\[
(B,\leqslant) = 
\varphi_1 (\theta_1 B', \rew_{\Gamma_1/ \theta_1 B'}) =
\varphi_2 (\theta_2 B', \rew_{\Gamma_2/ \theta_2 B'}).
\]
By the model amalgamation property for \POA\ theory morphisms
(Proposition \ref{amg-th}) there exists in $(S',D',F')$ an 
amalgamation of $(\theta_1 B', \rew_{\Gamma_1/ \theta_1 B'})$ and
$\varphi_2 (\theta_2 B', \rew_{\Gamma_2/ \theta_2 B'})$.
By the uniqueness of model amalgamation in \MSA\ (Proposition
\ref{push-amg}) it follows that the underlying \MSA\ algebra of the
model amalgamation in \POA\ is $B'$, hence the model amalgamation in
\POA\ gives a preordered algebra $(B',\leqslant)$. 

By the Satisfaction Lemma in \POA\ it follows that $(B',\leqslant)
\models \theta_k \Gamma_k$ hence $(B',\leqslant) \models \Gamma'$. 
Now we prove more, that $(B',\leqslant)$ is the \emph{free} over the
\MSA\ algebra $B'$. 

Let $(A',\leqslant)$ be any \POA\ $(S',D',F')$-algebra such that
$(A',\leqslant) \models \Gamma'$ and let $h' \co B' \to A'$ be an
\MSA\ homomorphism. 
By the Satisfaction Lemma in \POA, for each $k \in \{ 1,2 \}$ we have
that $\theta_k (A',\leqslant) \models \Gamma_k$.
Thus by Theorem \ref{rew-adjoint} it follows that $\theta_k h'$ is a
\POA\ homomorphism
$(\theta_k B',\rew_{\Gamma_k / \theta_k B'}) \to \theta_k (A',\leqslant)$.
By the uniqueness of the model homomorphisms amalgamation property in
\POA\ (Proposition \ref{push-amg}) we obtain that $h'$ is a \POA\
homomorphism $(B',\leqslant) \to (A',\leqslant)$, which proves the
freeness property of $(B',\leqslant)$.
From this freeness it follows that the identity \MSA\ homomorphism  
$1_{B'} \co B' \to B'$ is an isomorphism $(B',\leqslant) \to
(B',\rew_{\Gamma'/B'})$ which implies that $\leqslant \ = \
\rew_{\Gamma'/B'}$.
Thus $(B',\rew_{\Gamma'/B'})$ is the amalgamation of
$(\theta_1 B', \rew_{\Gamma_1 / \theta_1 B'})$ and
$(\theta_2 B', \rew_{\Gamma_2 / \theta_2 B'})$ which implies the
conclusion of the theorem. 
\end{proof}

This was about amalgamation of denotational models of algebraic
rewriting.
What about the amalgamation of computational models in the sense of
the development of Section \ref{comput-sec}?
This is a mathematically interesting problem but without much
applications because in practice we are interested only in
\emph{complete} computational models, and those coincide (modulo
isomorphisms) with the denotational models.  

\section{Conclusions and Future Research}

In this paper we did the following:
\begin{enumerate}

\item We defined in arbitrary \MSA\ algebras $B$ the rewriting relation
  $\rew_{\Gamma/B}$ determined by a set of conditional transitions
  $\Gamma$.

\item We proved that $(B,\rew_{\Gamma/B})$ is the \emph{free
    preordered algebra} that satisfies $\Gamma$.

\item We defined an abstract operational model for non-deterministic
  rewriting-based computations that captures the integration between
  the transition-based and the equational rewriting, both of them
  eventually modulo axioms.
  This operational model provides model theoretic foundations for the
  execution engines of Maude and CafeOBJ.

\item Under a common coherence condition the abstract operational
  model is proper \POA\ models (a preordered algebra) that is sound
  and complete with respect to the denotational preordered algebra
  of the respective \POA\ theory as provided by the adjunction
  result. 

\item The completeness result relies on a set of specific conditions
  that apply well to the conditions-as-Boolean-terms approach of Maude
  and CafeOBJ. 
  In the case of the latter language this is the only approach to
  conditional axioms.

\item Finally, we developed a compositionality result for the
  rewriting relations $\rew_{\Gamma/B}$ within the context of the
  pushout-based technique for modularisation, which may serve as a
  foundation for an institution theoretic comprehensive approach to
  the modularisation of non-deterministic rewriting.  
  
\end{enumerate}
As with other algebraic specification / programming frameworks,
algebraic non-deterministic rewriting can be refined by adding new
features.
Such an example is order-sortedness.
Such enhancements can be topics of further developments in the area.
With our compositionality result of Section \ref{comp-sec} we just
touched the great theme of modularisation.
This can be further developed for algebraic non-deterministic
rewriting by employing the corresponding institution theory applied to
\POA.

\subsection*{Acknowledgements}
This work was supported by a grant of the Romanian Ministry of
Education and Research, CNCS -- UEFISCDI, project number
PN-III-P4-ID-PCE-2020-0446, within PNCDI III.
The reviewers have made some comments that helped to improve some
aspects of the presentation.

\bibliographystyle{plain}
\bibliography{/Users/diacon/TEX/tex}

\end{document}